\documentclass[12pt,reqno]{amsart}
\usepackage{amsfonts,amssymb,latexsym,amsmath,amsthm,color,dsfont}
\usepackage{enumerate,enumitem,cite}
\usepackage{stmaryrd}
\usepackage{fullpage}
\usepackage{multirow}
\usepackage{makecell,array}
\usepackage{tablefootnote}
\usepackage[colorlinks,linkcolor=blue, citecolor=red,anchorcolor=blue, pdftex,unicode=true
]{hyperref}
\hypersetup{pdfencoding=auto}
\usepackage{bm}
\usepackage{tabularx}
\usepackage{graphicx, amsmath, amssymb, amsthm,anysize,float,booktabs,geometry}
\usepackage{pdflscape}
\usepackage{caption} 

 \usepackage[nobysame]{amsrefs}
 \def\MR#1{}                
\def\@bibmrnumber#1{}       
\def\@bib@mrreview#1{}      
\def\@bib@mathreviews#1{}   

\hypersetup{citecolor=red, linkcolor=blue, colorlinks=true}

\linespread{1.25}

\marginsize{2cm}{2cm}{2cm}{0cm}
\setlength{\textheight}{9.8in}
\setlength{\footskip}{1cm}

\allowdisplaybreaks

\newcommand\F{\mathbb{F}}

\newcommand{\Aut}{\mathrm{Aut}}
\newcommand{\Sym}{\mathrm{Sym}}

\newcommand\PGO{\mathrm{PGO}}

\usepackage{cleveref}
\crefname{section}{§}{§§}
\Crefname{section}{§}{§§}

\newcommand{\ra}{\rangle}

\newcommand{\la}{\langle}

\newcommand{\cQ}{{\mathcal Q}}

\theoremstyle{plain}
\newtheorem{theorem}{Theorem}[section]

\newtheorem{lemma}[theorem]{Lemma}

\newtheorem{definition}[theorem]{Definition}

\newtheorem{conjecture}[theorem]{Conjecture}

\newtheorem{remark}[theorem]{Remark}
\numberwithin{equation}{section}

%

\def\<{\langle}
\def\>{\rangle}
\def\la{\langle}
\def\ra{\rangle}

\newcommand{\End}{\operatorname{End}}

\title{The complete classification of triply-transitive strongly regular graphs}
\author{Weicong Li, Hanlin Zou$^\ast$}
\thanks{$^\ast$Corresponding author}
\address{Weicong~Li, Department of Mathematics, School of Sciences, Great Bay University, Dongguan, China.}
\email{liweicong@gbu.edu.cn}

\address{Hanlin Zou, School of Mathematics and Statistics, Yunnan University, Kunming 650091, China}
\email{zouhanlin@ynu.edu.cn}

\begin{document}

\begin{abstract}

This paper completes the classification of triply-transitive strongly regular graphs, a program recently initiated by Herman, Maleki, and Razafimahatratra. By proving that the collinearity graph of the polar space $\mathcal{Q}^{-}(5,q)$ and the affine polar graph $\mathrm{VO}^{\varepsilon}_{2m}(2)$ are triply-transitive, we resolve the final open cases in the classification. The result is a definitive list of all strongly regular graphs that exhibit this exceptional form of local symmetry, characterized by the equality $T_{0,\omega}=T_{\omega}=\widetilde{T}_{\omega}$ of their Terwilliger algebras.


\medskip
\noindent{{\it Keywords\/}: strongly regular graphs, Terwilliger algebra, triple-transitivity, polar space, affine polar graph, association scheme}

\smallskip

\noindent {{\it MSC (2020)\/}: 05E30, 05C25, 20B25, 51E99}

\end{abstract}

\maketitle

\section{Introduction}\label{sec_intro}

The classification of highly symmetric combinatorial structures is a central pursuit in algebraic combinatorics. A particularly compelling class of such structures consists of strongly regular graphs that are \emph{triply-transitive} (see Definition \ref{def_tt}). These graphs exhibit an exceptional degree of symmetry, characterized by a perfect alignment between their local combinatorial data, their representation-theoretic algebras, and their global automorphism groups. Moreover, the investigation of these graphs provides crucial insights into a fundamental general problem: determining for which association schemes the Terwilliger algebra coincides with the centralizer algebra of the vertex stabilizer (see Subsection \ref{ss_asta}).

In a recent systematic study, Herman, Maleki, and Razafimahatratra~\cite{HMR25} initiated a program to classify all triply-transitive strongly regular graphs. Their work provided a nearly complete classification, identifying almost all known families that satisfy this stringent condition. Specifically, they proved that a triply-transitive strongly regular graph must be one of the following:
\begin{itemize}
    \item[(a)] a complete multipartite graph with $n$ parts of size $m$,
    \item[(b)] the $5$-cycle,
    \item[(c)] the McLaughlin graph,
    \item[(d)] the Higman-Sims graph,
    \item[(e)] the Peisert graph $P^*(9)$ (isomorphic to the Paley graph of order $9$),
    \item[(f)] an $n\times n$ grid for some $n \geq 2$,
    \item[(g)] the collinearity graph of the polar space $\mathcal{Q}^-(5,q)$,
    \item[(h)] the affine polar graph $\mathrm{VO}_{2m}^{\varepsilon}(2)$ for $m \geq 2$ and $\varepsilon = \pm 1$.
\end{itemize}
Furthermore, they confirmed that the graphs in families (a)--(f) are indeed triply-transitive. However, the status of the two infinite families, (g) and (h), remained unresolved. Based on analysis of small examples, they formulated the following conjectures for the two remaining infinite families.

\begin{conjecture}[{\cite[Conjecture 6.13]{HMR25}}]\label{conj1}
For any prime power $q$, the collinearity graph of the polar space $\mathcal{Q}^{-}(5,q)$ is triply-transitive.
\end{conjecture}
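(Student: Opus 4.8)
The plan is to recast triple-transitivity of $\Gamma$, the collinearity graph of $\mathcal{Q}^{-}(5,q)$, as a statement about the orbits of a vertex stabilizer in $G:=\Aut(\Gamma)$ on ordered pairs of vertices, and then to settle that statement by repeated use of Witt's extension theorem inside the orthogonal group, exploiting crucially the extra symmetry provided by orthogonal similitudes. Here $V=V(\Gamma)$ is the set of isotropic points of a nondegenerate elliptic quadratic form on $\mathbb{F}_q^6$ and the edges are the collinear pairs. Since a generalized quadrangle has no triangles, the maximal cliques of $\Gamma$ are exactly the lines of $\mathcal{Q}^{-}(5,q)$, so $G=\PGaO^{-}_{6}(q)$, the full automorphism group of the polar space — which contains all projective similitudes of the quadric together with the field automorphisms — and $G$ acts on $V$ as a rank-$3$ group. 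By the criterion recalled above (the equality $T_{0,\omega}=T_\omega=\widetilde{T}_\omega$ holds exactly when the $G_\omega$-orbits on $V\times V$ coincide with the relations defined by the nonzero Terwilliger matrices $E_i^{*}A_jE_k^{*}$ — here $15$ of them, with pairwise disjoint supports — equivalently when $G_\omega$ has at most $15$ orbits on $V\times V$), and since the rank-$3$ property already accounts for the five orbits that involve $\omega$ itself, the task reduces to the three claims: for a fixed $\omega\in V$, the stabilizer $G_\omega$ has at most three orbits on $\Gamma_1(\omega)\times\Gamma_1(\omega)$, at most two on $\Gamma_1(\omega)\times\Gamma_2(\omega)$, and at most three on $\Gamma_2(\omega)\times\Gamma_2(\omega)$, where $\Gamma_1(\omega)$ and $\Gamma_2(\omega)$ are the neighbours and the non-neighbours of $\omega$; in each case the orbits should be distinguished only by the $\Gamma$-distance within the pair.

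I expect the first two claims to be routine. The totally isotropic lines through $\omega$ are the points of the residue $\omega^{\perp}/\langle\omega\rangle\cong\mathcal{Q}^{-}(3,q)$, on which $G_\omega$ induces a group containing $\PGL_2(q^2)$ in its $3$-transitive action on $q^2+1$ points; together with transitivity of the stabilizer of a totally isotropic line on the $q$ points of that line other than $\omega$, this should give the first claim, the three orbits being the diagonal, the collinear pairs on a common line through $\omega$, and the pairs collinear to $\omega$ but lying on distinct lines through $\omega$. For the second claim, a Gram-matrix computation shows that a neighbour $x$ and a non-neighbour $z$ of $\omega$ span a plane $\langle\omega,x,z\rangle$ whose isometry type, as a configuration with three marked isotropic points, is uniquely determined in each of the two cases $z\sim x$ and $z\not\sim x$, so Witt's extension theorem fuses all such configurations into a single $\Om_{6}(q)$-orbit in each case.

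The third claim — orbits on pairs of non-neighbours — is the heart of the argument and the step I expect to be the main obstacle. Fixing a non-neighbour $z$ of $\omega$, the line $\langle\omega,z\rangle$ is hyperbolic with perp $\cong\mathcal{Q}^{-}(3,q)$ carrying a copy of $\Om_4(q)$ that fixes $\omega$ and $z$; for a second non-neighbour $z'\neq z$ of $\omega$ one examines the plane $\pi=\langle\omega,z,z'\rangle$. It is degenerate with totally isotropic radical precisely when $z'\sim z$, and then the radical is the common neighbour of $\omega,z,z'$ and the marked configuration is unique up to isometry, so Witt's theorem (with the action of a parabolic stabilizing $\pi$) gives one orbit. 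When $z'\not\sim z$ and $q$ is odd, however, $\pi$ is nondegenerate and a genuine invariant appears: the discriminant of $\pi$, equivalently $B(\omega,z)\,B(z,z')\,B(z',\omega)$ modulo squares, is preserved by $\Om_6(q)$ — indeed by $\POmega^{-}_{6}(q)$ — and takes both square classes as $z'$ varies, so at that level this case splits into two orbits. The resolution is that a similitude with non-square multiplier scales every Gram matrix by that multiplier and hence interchanges the two discriminant classes; such similitudes lie in $G=\PGaO^{-}_{6}(q)$ and, after composition with a suitable element of $\Om_6(q)$, can be taken to fix $\omega$, so the two classes merge into a single $G_\omega$-orbit. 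For $q$ even, $\pi$ is instead degenerate with anisotropic radical in the case $z'\not\sim z$, the \emph{modulo squares} distinction is vacuous, and Witt's theorem gives one orbit directly, the two cases being separated only by whether the radical of $\pi$ is isotropic.

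Assembling the three claims, $G_\omega$ has exactly the minimal $15$ orbits on $V\times V$, namely the supports of the matrices $E_i^{*}A_jE_k^{*}$, so $\widetilde{T}_\omega=T_\omega=T_{0,\omega}$ and $\Gamma$ is triply-transitive, which proves Conjecture \ref{conj1}. The write-up should need real care only in the third claim: one must identify the configuration invariants exactly and, decisively, work with the full polar-space group $\PGaO^{-}_{6}(q)$ including orthogonal similitudes, not with $\POmega^{-}_{6}(q)$ alone. This is consistent with the fact that the second subconstituent $\Gamma_2(\omega)$ of $\Gamma$ is isomorphic to the affine polar graph $\mathrm{VO}_{4}^{-}(q)$, which for $q=2$ lies in family (h), so that the two infinite families of the classification turn out to be intertwined.
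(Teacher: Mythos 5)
Your proposal is correct and follows essentially the same route as the paper: both reduce the problem to showing that the vertex stabilizer has exactly $3$, $2$, and $3$ orbits on the blocks $\Delta_1\times\Delta_1$, $\Delta_1\times\Delta_2$, and $\Delta_2\times\Delta_2$ (equivalently, the suborbit counts $r_1=3$, $t=2$, $r_2=3$ of Lemma \ref{lem:block-decomp}), matching the $15$-dimensional $T_{0,\omega}$, and both hinge on the same key point for odd $q$ — that the two square-classes of the discriminant invariant $B(\omega,z)B(z,z')B(z',\omega)$ on non-adjacent pairs in $\Delta_2$ are fused by an orthogonal similitude with nonsquare multiplier (the paper's Lemma \ref{lem_tech} and the map $\rho$ in Lemma \ref{lem_H2D2_1}). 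The differences are only presentational: you classify marked triples by Gram matrices and invoke Witt's extension theorem where the paper writes down explicit coordinate isometries, and you count $G_\omega$-orbitals on $\Omega\times\Omega$ where the paper uses the block dimension decomposition.
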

\begin{conjecture}[{\cite[Conjecture 6.17]{HMR25}}]\label{conj2}
For any integer $m \geq 2$ and $\varepsilon=\pm 1$, the affine polar graph $\mathrm{VO}^{\varepsilon}_{2m}(2)$ is triply-transitive.
\end{conjecture}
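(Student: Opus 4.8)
The plan is to exhibit a large group of automorphisms of $\Gamma:=\mathrm{VO}^{\varepsilon}_{2m}(2)$ and to verify the combinatorial signature of triple‑transitivity for it directly, with the whole argument resting on Witt's extension theorem. Write $V=\F_2^{2m}$, fix a nondegenerate quadratic form $Q$ on $V$ of type $\varepsilon$ with polar form $B(u,v)=Q(u+v)+Q(u)+Q(v)$, and recall that the vertices of $\Gamma$ are the elements of $V$ with $u\sim v$ precisely when $u\neq v$ and $Q(u-v)=0$. The affine orthogonal group $G:=V\rtimes\mathrm{O}^{\varepsilon}_{2m}(2)$ --- all translations together with the stabilizer $\mathrm{O}(Q)$ of the origin --- acts faithfully on $\Gamma$ by automorphisms, so $G\le\Aut(\Gamma)$. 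By the criterion for triple‑transitivity recorded in Definition~\ref{def_tt} and the reduction preceding it (following \cite{HMR25}), it is enough to show that $\Aut(\Gamma)$ --- and for this it suffices that already $G$ --- acts transitively on the set of ordered triples of vertices of each fixed isomorphism type, i.e.\ each fixed pattern of pairwise equalities and adjacencies; this forces the centralizer algebra of a vertex stabilizer down onto the Terwilliger algebra, whence $T_{0,\omega}=T_{\omega}=\widetilde{T}_{\omega}$. Since $G$ contains all translations, we may always take the first vertex of a triple to be $\omega=0$, whose stabilizer in $G$ is $\mathrm{O}(Q)$.

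The crucial point is that over $\F_2$ the isomorphism type of an ordered triple $(0,u,v)$ of pairwise distinct vertices is recorded \emph{exactly} by the Gram data $(Q(u),Q(v),B(u,v))\in\F_2^{3}$: indeed $0\sim u\iff Q(u)=0$, $0\sim v\iff Q(v)=0$, and $u\sim v\iff Q(u)+Q(v)+B(u,v)=0$, so the adjacency pattern is an invertible affine function of $(Q(u),Q(v),B(u,v))$. Hence transitivity of $\mathrm{O}(Q)$ on the triples of a given type is equivalent to transitivity of $\mathrm{O}(Q)$ on the pairs $(u,v)$ of linearly independent vectors with prescribed values of $Q(u)$, $Q(v)$, $B(u,v)$; and this last statement is immediate from \textbf{Witt's extension theorem}. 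Given two such pairs $(u_1,v_1)$ and $(u_2,v_2)$, the assignment $u_1\mapsto u_2$, $v_1\mapsto v_2$ is an isometry between the $2$‑dimensional subspaces $\langle u_1,v_1\rangle$ and $\langle u_2,v_2\rangle$ of the nondegenerate quadratic space $(V,Q)$ --- these subspaces may themselves be degenerate, which Witt's extension theorem still accommodates --- and therefore extends to an element of $\mathrm{O}(Q)$ carrying the first pair to the second.

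It remains to handle the degenerate triple types, in which two of the three vertices coincide; after translating to $\omega=0$ these reduce to the transitivity of $\mathrm{O}(Q)$ on the nonzero singular vectors of $V$ and on the non‑singular vectors of $V$, which is again Witt's theorem and which for $m\ge 2$ (so $2m\ge 4$) is non‑vacuous for both signs $\varepsilon$. The only feasibility subtlety is that a pair of linearly independent singular vectors $u,v$ with $B(u,v)=0$ --- equivalently, a totally singular $2$‑subspace of $(V,Q)$ --- exists if and only if the Witt index of $Q$ is at least $2$, which fails precisely when $(\varepsilon,m)=(-1,2)$; but then $\Gamma$ is the triangle‑free Clebsch graph, the omitted type is a triangle through $\omega$, and such a type does not occur, so nothing is lost. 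In all remaining cases every one of the eight Gram types is realised by a linearly independent pair. Therefore $G$, hence $\Aut(\Gamma)$, is transitive on ordered triples of each occurring type, and the criterion above gives that $\mathrm{VO}^{\varepsilon}_{2m}(2)$ is triply‑transitive.

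The conceptual input is slight --- essentially the one observation that over $\F_2$ the combinatorics of a vertex triple \emph{is} the linear‑algebraic datum governed by Witt's theorem --- so the work, and the place where care is needed, is bookkeeping: matching each cell of the first and second subconstituent partitions to the correct Gram type, checking that Witt's extension theorem really does apply to the (sometimes degenerate) $2$‑subspaces involved, and isolating the Clebsch case. A secondary point, which the general reduction to triple‑transitivity should already absorb but which must be confirmed somewhere, is that $\dim T_{\omega}$ for $\mathrm{VO}^{\varepsilon}_{2m}(2)$ equals the number of occurring triple types --- namely $15$ when $\Gamma$ has triangles and $14$ for the Clebsch graph --- so that the orbit count genuinely certifies $T_{0,\omega}=T_{\omega}=\widetilde{T}_{\omega}$; this comes down to ruling out accidental linear dependences in $T_{\omega}$, which do not arise because the two subconstituents of $\mathrm{VO}^{\varepsilon}_{2m}(2)$ carry genuine two‑class structure. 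If one instead prefers to argue through the full automorphism group, the single non‑formal step is the identification $\Aut(\mathrm{VO}^{\varepsilon}_{2m}(2))=V\rtimes\mathrm{O}^{\varepsilon}_{2m}(2)$ --- the absence of origin‑fixing automorphisms outside $\mathrm{O}(Q)$ --- which follows because $\Gamma$ determines the form $Q$, the small values of $m$ being dealt with individually.
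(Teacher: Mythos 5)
Your argument is correct, and it takes a genuinely different route from the paper. The paper follows the machinery of \cite{HMR25}: it invokes the block dimension decomposition of Lemma~\ref{lem:block-decomp}, imports the rank-$3$ property of the two subconstituents $\Gamma(\mathcal{Q}^{\varepsilon}(2m-1,2))$ and $\mathrm{NO}^{\varepsilon}_{2m}(2)$ to get $r_1=r_2=3$, computes the remaining parameter $t=2$ by exhibiting explicit isometries fusing the orbits of $G_\omega\cap G_{\omega_2}$ on $\Delta_1$ (Lemma~\ref{lem_H2D1_2}), and checks $m=2$ by Magma. You instead observe that over $\mathbb{F}_2$ the adjacency type of a triple $(0,u,v)$ is an invertible function of the Gram data $(Q(u),Q(v),B(u,v))$, so that transitivity of $\mathrm{O}(Q)$ on pairs of each type is a single application of Witt's extension theorem (the same Theorem~7.4 of Taylor that the paper itself uses, valid here since $B$ is nondegenerate in even dimension over $\mathbb{F}_2$, so the radical condition is vacuous, and since two distinct nonzero vectors over $\mathbb{F}_2$ are automatically independent). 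This buys uniformity: one lemma replaces the separate computations of $r_1$, $r_2$ and $t$, the $m=2$ cases need no computer check, and --- as you could state more forcefully --- you never need to identify $\Aut(\Gamma)$, because for any $H\le G_\omega$ one has $\widetilde{T}_\omega=\End_{G_\omega}(\mathbb{C}^{|\Omega|})\subseteq\End_{H}(\mathbb{C}^{|\Omega|})$, so it suffices that the subgroup $\mathrm{O}(Q)$ already achieves the minimal orbit count; your closing paragraph worries about this unnecessarily. Two small points to tighten: the identification $\dim T_{0,\omega}=\#\{\text{realizable types}\}$ is automatic (the nonzero $E^*_{i,\omega}A_jE^*_{k,\omega}$ have pairwise disjoint supports), so no ``accidental dependences'' can occur and the paper's Lemma~\ref{lem:dimT0} supplies the count $15$ (resp.\ $14$); and the realizability of the seven non-totally-singular Gram types for all $m\ge 2$ deserves the one-line verification you gesture at, the totally singular plane being correctly tied to Witt index $\ge 2$ and hence excluded exactly for the Clebsch case $(\varepsilon,m)=(-1,2)$.
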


In this paper, we prove these two conjectures. By a detailed analysis of the orbits of specific point stabilizers, we show that both families satisfy the defining condition of triple transitivity. Our main result is thus the following complete classification theorem.

\begin{theorem}[Main Theorem]\label{thm_main}
A strongly regular graph is triply-transitive if and only if it is one of the graphs listed in (a)--(h) above.
\end{theorem}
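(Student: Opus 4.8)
The plan is to obtain Theorem~\ref{thm_main} by combining the partial classification of \cite{HMR25} with proofs of Conjectures~\ref{conj1} and~\ref{conj2}. The paper \cite{HMR25} already shows that every triply-transitive strongly regular graph lies in the list (a)--(h), which is the ``only if'' direction, and it verifies that the graphs in families (a)--(f) are triply-transitive; hence the remaining content of the theorem is exactly that the collinearity graph of $\mathcal{Q}^-(5,q)$ (Conjecture~\ref{conj1}) and the affine polar graph $\mathrm{VO}^{\varepsilon}_{2m}(2)$ (Conjecture~\ref{conj2}) are triply-transitive, after which the theorem follows at once. Both are rank-three graphs with a completely understood automorphism group $G$, and for each I would run the same argument. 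Fix a vertex $\omega$. Every generator $A_i$, $E^*_j$ of the Terwilliger algebra $T_\omega$ commutes with the stabilizer $G_\omega$, so $T_{0,\omega}\subseteq T_\omega\subseteq\widetilde{T}_\omega:=\mathrm{End}_{G_\omega}(\mathbb{C}V)$; as these are finite-dimensional, the defining condition $T_{0,\omega}=T_\omega=\widetilde{T}_\omega$ of Definition~\ref{def_tt} amounts to the equality of their dimensions. Now $\dim\widetilde{T}_\omega$ equals the number of $G_\omega$-orbits on $V\times V$, whereas $\dim T_{0,\omega}$ is read off from the parameters as the number of nonzero triple products $E^*_iA_jE^*_k$ (at most fifteen for a strongly regular graph). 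So the task splits into: (i) determining $\dim T_{0,\omega}=\dim T_\omega$, and (ii) showing $G_\omega$ has exactly that many orbits on ordered pairs of vertices.

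For Conjecture~\ref{conj1} I would model the vertex set as the point set of the elliptic quadric $\mathcal{Q}^-(5,q)\subseteq\mathrm{PG}(5,q)$, with $x\sim y$ precisely when the line $\langle x,y\rangle$ lies on the quadric, and take $G=\mathrm{P}\Gamma\mathrm{O}^-_6(q)$. Fixing a point $\omega$ of the quadric, $G_\omega$ preserves the quadratic form together with the flag $\langle\omega\rangle\subset\omega^\perp$, and the three subconstituents are $\{\omega\}$, the set of quadric points of $\omega^\perp$ other than $\omega$, and the set of quadric points outside $\omega^\perp$. The heart of the proof is the orbit enumeration: applying Witt's extension theorem to configurations that \emph{contain} $\omega$, one shows that the $G_\omega$-orbit of a pair $(x,y)$ of quadric points is determined by the position of each of $x,y$ relative to $\omega$, the position of $x$ relative to $y$, and the isometry type of the subspace $\langle\omega,x,y\rangle$ together with how it meets $\omega^\perp$; one then checks that every admissible configuration actually occurs and that distinct ones are not fused, and compares the total with $\dim T_\omega$.

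For Conjecture~\ref{conj2} the vertex set is $V=\mathbb{F}_2^{2m}$ carrying a nondegenerate quadratic form $Q$ of type $\varepsilon$, with $u\sim v$ iff $Q(u-v)=0$ and $u\neq v$, and $G=\mathbb{F}_2^{2m}\rtimes\mathrm{O}^{\varepsilon}_{2m}(2)$; fixing $\omega=0$, the stabilizer is $\mathrm{O}^{\varepsilon}_{2m}(2)$ acting linearly. Witt's theorem for quadratic forms gives that the $G_\omega$-orbit of an ordered pair $(u,v)$ of vectors is governed by the tuple $\bigl(Q(u),Q(v),B(u,v)\bigr)$, where $B$ is the polar form of $Q$, together with whether $u=v$; so the work is to list the admissible tuples, tracking the constraints and, crucially, which tuples are geometrically realizable in the space at hand --- for instance $u=v$ forces $B(u,v)=0$, distinct nonzero vectors of $\mathbb{F}_2^{2m}$ are never proportional, and the tuple $(0,0,0)$ with $u\neq v$ requires two distinct orthogonal singular vectors, which exist exactly when the Witt index is large enough. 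A genuinely characteristic-two point is that it is $Q$, not merely $B$, that controls the subconstituent partition, so one needs the quadratic-form version of Witt's theorem rather than its symplectic analogue; the adjacency rule inside and between subconstituents then reads $B(u,v)=Q(u)+Q(v)$.

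The step I expect to be the main obstacle is carrying out the orbit enumeration precisely and reconciling it with $\dim T_\omega$. First, one must pin down $\dim T_\omega$ exactly: for most parameter sets it equals $\dim T_{0,\omega}=15$, but it can be smaller --- for instance the triangle-free case $\mathrm{VO}^-_4(2)$, the Clebsch graph, has $\lambda=0$ and hence $\dim T_{0,\omega}=14$ --- and in all cases one has to exclude accidental linear dependencies among the $E^*_iA_jE^*_k$ and invoke (or reprove) the relevant structure theory of Terwilliger algebras of strongly regular graphs. Second, the orbit bookkeeping is delicate: in case (g) because $G_\omega$ is a \emph{proper} point stabilizer inside the isometry group, so Witt's theorem must be applied relative to $\omega$ and one must avoid over- or undercounting the configurations through $\omega$; and in case (h) because of the characteristic-two subtleties and the realizability conditions above. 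Finally, the smallest members of each family --- small $q$ for $\mathcal{Q}^-(5,q)$, and $m=2$ with $\varepsilon=\pm1$ for $\mathrm{VO}^{\varepsilon}_{2m}(2)$ --- may behave exceptionally (extra automorphisms, coincidences with other graphs on the list, or degenerate form geometry), so these will most likely be verified separately, if necessary by direct computation, before the uniform argument settles the remaining cases.
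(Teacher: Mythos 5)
Your proposal is correct and follows essentially the same route as the paper: quote \cite{HMR25} for the ``only if'' direction and for families (a)--(f), then settle (g) and (h) by showing that $\dim T_{0,\omega}$ (14 for the Clebsch graph $\mathrm{VO}^-_4(2)$, 15 otherwise, by the triangle criterion) equals $\dim\widetilde{T}_\omega$, the latter obtained as an orbital count for the point stabilizer via Witt's theorem and the rank-3 block decomposition, with the small cases $(m,\varepsilon)=(2,\pm1)$ checked separately. The one point to treat with care --- which the paper isolates in a dedicated finite-field lemma --- is that for $\mathcal{Q}^-(5,q)$ with $q$ odd the isometry type of $\langle\omega,x,y\rangle$ is a strictly finer invariant than the $G_\omega$-orbit: the ``square'' and ``nonsquare'' classes in the second subconstituent are fused only by a similarity in $\mathrm{P}\Gamma\mathrm{O}^-_6(q)$, not by any isometry, so an enumeration by isometry classes alone would yield 16 rather than 15 and the comparison of dimensions would fail.
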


The rest of this paper is organized as follows. In Section \ref{sec_prelim}, we review the fundamental concepts and properties of association schemes, Terwilliger algebras, and triply-transitive strongly regular graphs. Additionally, we provide preliminary results in finite fields that will be needed later. After that, we present the proofs of Conjectures \ref{conj1} and \ref{conj2}, and the proof of the main theorem in Section \ref{sec_main}.

\section{Preliminaries}\label{sec_prelim}

This section collects the necessary background material, beginning with the general theory of association schemes and then specializing to strongly regular graphs and the tools needed for our proofs.

\subsection{Association schemes and the Terwilliger algebra}\label{ss_asta}\

Let $\Omega$ be a finite nonempty set. A \emph{symmetric association scheme with $d$ classes} is a pair $(\Omega, \mathcal{R})$ where $\mathcal{R} = \{R_0, R_1, \ldots, R_d\}$ is a partition of $\Omega \times \Omega$ satisfying:
\begin{enumerate}[label=(\roman*)]
    \item $R_0 = \{(\omega,\omega) : \omega \in \Omega\}$;
    \item For each $i \in \{1, \ldots, d\}$, the relation $R_i$ is symmetric: $(\omega_1, \omega_2) \in R_i$ implies $(\omega_2, \omega_1) \in R_i$;
    \item For any $i, j, k \in \{0, 1, \ldots, d\}$, there exists a nonnegative integer $p_{ij}^k$ (the \emph{intersection number}) such that for any $(\omega_1, \omega_2) \in R_k$,
    \[
    |\{\omega_3 \in \Omega : (\omega_1, \omega_3) \in R_i \text{ and } (\omega_3, \omega_2) \in R_j\}| = p_{ij}^k.
    \]
\end{enumerate}

For each $i \in \{0, 1, \ldots, d\}$, let $A_i$ be the adjacency matrix of the relation $R_i$. Fix a vertex $\omega \in \Omega$. Define the diagonal matrix $E^*_{i,\omega}$ (the \emph{dual idempotent}) by
\begin{equation}\label{eq_Ei*}
E^*_{i,\omega} ({\alpha,\alpha}) = 
\begin{cases}
1 & \text{if } (\omega, \alpha) \in R_i, \\
0 & \text{otherwise}.
\end{cases}
\end{equation}
Let $\mathrm{Mat}_{|\Omega|}(\mathbb{C})$ be the full matrix algebra over $\mathbb{C}$ whose rows and columns are indexed by the elements of $\Omega$. The \emph{Terwilliger algebra} $T_\omega = T_\omega(\Omega, \mathcal{R})$ with respect to a vertex $\omega$ is the subalgebra of $\operatorname{Mat}_{|\Omega|}(\mathbb{C})$ generated by
\[
\{A_0, A_1, \ldots, A_d, E^*_{0,\omega}, E^*_{1,\omega}, \ldots, E^*_{d,\omega}\}.
\]
This algebra was introduced by Terwilliger \cites{Terwilliger1,Terwilliger2,Terwilliger3} as the \emph{subconstituent algebra} and provides a powerful tool for studying the local structure of association schemes.

Let $\Aut(\mathcal{R})$ denote the automorphism group of the association scheme, defined as the set of all permutations of $\Omega$ that preserve every relation in $\mathcal{R}$. For any subgroup $H \leq \Sym(\Omega)$, the \emph{centralizer algebra} is
\[
\End_H(\mathbb{C}^{|\Omega|}) = \{ X \in \operatorname{Mat}_{|\Omega|}(\mathbb{C}) : P_g X = X P_g, \forall g \in H \},
\]
where $P_g$ is the permutation matrix corresponding to $g$. 

For any $\omega \in \Omega$, we have the fundamental inclusion:
\begin{equation}\label{eq:fundamental-inclusion}
T_\omega \subseteq \End_{\Aut(\mathcal{R})_\omega}(\mathbb{C}^{|\Omega|}).
\end{equation}
Understanding when equality holds in \eqref{eq:fundamental-inclusion} is an important problem in algebraic combinatorics, see \cite{TFIL19}. This paper contributes to this direction by completely resolving the case of strongly regular graphs under the stronger condition of triple transitivity.

\subsection{Triply-transitive strongly regular graphs}\

A graph $\Gamma = (\Omega, E)$ is {\it strongly regular} with parameters $(v, k, \lambda, \mu)$ if it is a $k$-regular graph on $v$ vertices such that every pair of adjacent vertices has exactly $\lambda$ common neighbors, and every pair of distinct non-adjacent vertices has exactly $\mu$ common neighbors.

Every strongly regular graph $\Gamma = (\Omega,E)$ gives rise to a symmetric 2-class association scheme $(\Omega, \{R_0, R_1, R_2\})$, where $R_1$ and $R_2$ represent the edges and non-edges, respectively. Fixing a base vertex $\omega \in \Omega$, we obtain the canonical partition of the vertex set into \emph{subconstituents}:
\[
\Delta_0(\omega) = \{\omega\}, \quad \Delta_1(\omega) = \{\alpha\in \Omega : \omega\sim \alpha \}, \quad \Delta_2(\omega) = \{\alpha\in \Omega : \alpha \neq \omega, \omega\nsim \alpha\}.
\]
Let $A_0=I, A_1, A_2$ be the adjacency matrices of the relations $R_0, R_1, R_2$, respectively. The corresponding dual idempotents $E^*_{i,\omega}$ ($i=0,1,2$) are the diagonal projection matrices onto $\Delta_i(\omega)$ (see \eqref{eq_Ei*}). The Terwilliger algebra $T_\omega(\Gamma)$ is generated by $\{A_0, A_1, A_2, E^*_{0,\omega}, E^*_{1,\omega}, E^*_{2,\omega}\}$ and it contains the subspace
\[
T_{0,\omega} = \operatorname{Span}\{ E^*_{i,\omega} A_j E^*_{k,\omega} : i, j, k \in \{0,1,2\} \}.
\]

Let $G = \operatorname{Aut}(\Gamma)$ and let $G_\omega$ be the stabilizer of a vertex $\omega$. The centralizer algebra is denoted by
\[
\widetilde{T}_{\omega} = \operatorname{End}_{G_\omega}(\mathbb{C}^{|\Omega|}).
\]
The three algebras above satisfy the following natural chain of inclusions. 
\begin{equation}\label{eq_inclusion}
T_{0,\omega} \subset T_{\omega}\subset \widetilde{T}_{\omega}.
\end{equation}

\begin{definition}\label{def_tt}
A strongly regular graph is called \emph{triply-transitive} if it is vertex-transitive and
\[
T_{0,\omega} = T_\omega = \widetilde{T}_\omega,
\]
for any vertex $\omega$.
\end{definition}
\begin{remark}
In the above definition, the equality $T_{0,\omega} = T_{\omega}$ is equivalent to $\Gamma$ being \emph{triply-regular} (see \cite[Lemma 4]{Mun93}), a strong combinatorial property meaning that the number of vertices at prescribed distances from any triple depends only on the distances between the points in the triple, not on the specific triple chosen. The further equality $T_{\omega} = \widetilde{T}_{\omega}$ signifies that the algebra generated by the local combinatorial data is as large as it can possibly be, given the symmetries of the graph, perfectly capturing the symmetry imposed by the global automorphism group. Consequently, the classification of triply-transitive strongly regular graphs is the classification of those graphs which are, in a very precise sense, maximally symmetric from the viewpoint of the Terwilliger algebra.
\end{remark}

By \eqref{eq_inclusion} and Definition \ref{def_tt}, in order to show that a vertex-transitive strongly regular graph is triply-transitive, it suffices to show that $\dim(T_{0,\omega})=\dim(\widetilde{T})$. We now collect some tools for calculating the dimensions of $T_{0,\omega}$ and $\widetilde{T}$.

 Recall that a strongly regular graph $\Gamma$ is {\it primitive} if both $\Gamma$ and its complement are connected. Equivalently, if $\Gamma$ has parameters $(v,k,\lambda,\mu)$, then it is primitive precisely when $\lambda<k-1$ and $\mu<k$ (see \cite[1.1.3]{Srgs22}). The latter condition enables us to easily verify that the collinearity graph of $\mathcal{Q}^-(5,q)$ and the affine polar graph $\mathrm{VO}_{2m}^\varepsilon(2)$ are both primitive. To determine the dimension $\dim(T_{0,\omega})$ of these graphs, we will apply the following lemma.

\begin{lemma}[{\cite[Proposition 2.5(iii-iv)]{HMR25}}]\label{lem:dimT0}
Let $\Gamma$ be a primitive strongly regular graph. Then the following hold.

(1) If exactly one of $\Gamma$ and its complement contains triangles, then $\dim(T_{0,\omega}) = 14$.

(2) If both $\Gamma$ and its complement contain triangles, then $\dim(T_{0,\omega}) = 15$.
\end{lemma}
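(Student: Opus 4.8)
The plan is a direct computation of $\dim(T_{0,\omega})$ organised by block position. Write $\Delta_i=\Delta_i(\omega)$ and $E^*_i=E^*_{i,\omega}$. The space $T_{0,\omega}$ is spanned by the $27$ matrices $E^*_iA_jE^*_k$ with $i,j,k\in\{0,1,2\}$, and for fixed $(i,k)$ every nonzero entry of $E^*_iA_jE^*_k$ lies in the block $\Delta_i\times\Delta_k$. Since $\{\Delta_0,\Delta_1,\Delta_2\}$ partitions $\Omega$, matrices attached to different pairs $(i,k)$ have disjoint supports, hence
\[
\dim(T_{0,\omega})=\sum_{i,k\in\{0,1,2\}}\dim\Span\{E^*_iA_jE^*_k : j\in\{0,1,2\}\},
\]
and it remains to evaluate the nine block-contributions. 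I will use: $A_0=I$; the identity $A_0+A_1+A_2=J$, so that $E^*_iA_1E^*_k+E^*_iA_2E^*_k=E^*_iJE^*_k$ for $i\ne k$; the combinatorial meaning of the blocks, namely that $E^*_iA_1E^*_k$ is the $0/1$ matrix of edges of $\Gamma$ from $\Delta_i$ to $\Delta_k$ and $E^*_iA_2E^*_k$ that of non-edges between distinct vertices; and the fact that primitivity (i.e.\ $\lambda<k-1$ and $\mu<k$, so that $\Gamma$ and $\overline{\Gamma}$ are both connected and non-complete) gives $k-1-\lambda\ge1$, $k-\mu\ge1$, and $v-2k+\lambda\ge1$ (the last quantity being the $\mu$-parameter of the connected graph $\overline{\Gamma}$).

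The five blocks meeting $\Delta_0=\{\omega\}$ are handled immediately. Block $(0,0)$ contributes only $E^*_0$ (dimension $1$). Because $\omega$ is adjacent to all of $\Delta_1$ and to none of $\Delta_2$, block $(0,1)$ contributes only $E^*_0A_1E^*_1=E^*_0JE^*_1$ and block $(0,2)$ only $E^*_0A_2E^*_2=E^*_0JE^*_2$ (dimension $1$ each), and the transposed blocks $(1,0)$, $(2,0)$ the same; total $5$, hypothesis-free. For the off-diagonal interior blocks $(1,2)$ and $(2,1)$: $E^*_1A_0E^*_2=0$, while a vertex of $\Delta_1$ has $k-1-\lambda\ge1$ neighbours and $v-2k+\lambda\ge1$ non-neighbours in $\Delta_2$, so both $E^*_1A_1E^*_2$ and $E^*_1A_2E^*_2$ are nonzero; having complementary $0/1$ supports (their sum is $E^*_1JE^*_2$) they are independent, so each of $(1,2)$, $(2,1)$ contributes $2$, adding $4$. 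So far the count is $9$, and it does not depend on the triangle hypotheses.

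Those hypotheses enter only through the diagonal interior blocks $(1,1)$ and $(2,2)$. In $(1,1)$ the matrix $E^*_1=E^*_1A_0E^*_1$ is nonzero and, being supported on the diagonal, is independent of the off-diagonal matrices $E^*_1A_1E^*_1$ (edges inside $\Delta_1$) and $E^*_1A_2E^*_1$ (non-edges inside $\Delta_1$); the latter is nonzero since $\lambda<k-1$ prevents $\Delta_1$ from being a clique, while $E^*_1A_1E^*_1\ne0$ precisely when $\lambda\ge1$, that is, when $\Gamma$ contains a triangle. As the two off-diagonal matrices have complementary supports, block $(1,1)$ contributes $2$ if $\Gamma$ is triangle-free and $3$ otherwise. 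Symmetrically, since $\Delta_2$ is the neighbourhood of $\omega$ in $\overline{\Gamma}$: $E^*_2A_1E^*_2$ (edges of $\Gamma$ inside $\Delta_2$) is nonzero because $k-\mu\ge1$, and $E^*_2A_2E^*_2$, being the edge matrix of $\overline{\Gamma}$ inside $\Delta_2$, is nonzero exactly when $\overline{\Gamma}$ contains a triangle; so block $(2,2)$ contributes $2$ or $3$ according as $\overline{\Gamma}$ is triangle-free or not. Summing,
\[
\dim(T_{0,\omega})=13+\varepsilon_\Gamma+\varepsilon_{\overline{\Gamma}},
\]
where $\varepsilon_\Gamma$ (resp.\ $\varepsilon_{\overline{\Gamma}}$) is $1$ if $\Gamma$ (resp.\ $\overline{\Gamma}$) has a triangle and $0$ otherwise; this equals $14$ when exactly one of $\Gamma,\overline{\Gamma}$ has a triangle and $15$ when both do, establishing (1) and (2).

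The argument is elementary; the only thing requiring care is the case analysis over the nine blocks — in particular, correctly identifying which of the $27$ matrices vanish and invoking primitivity exactly where it is needed (to force $E^*_1A_2E^*_1$, $E^*_2A_1E^*_2$, $E^*_1A_1E^*_2$, $E^*_1A_2E^*_2$ to be nonzero) so that the entire dependence on the triangle hypotheses sits in the two diagonal blocks $(1,1)$ and $(2,2)$.
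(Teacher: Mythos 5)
Your computation is correct, and in fact the paper offers no proof of this lemma at all: it is quoted verbatim from \cite[Proposition 2.5(iii--iv)]{HMR25}, so your block-by-block count supplies a self-contained elementary verification of the cited result. The key structural observations are all sound: the $27$ generators $E^*_iA_jE^*_k$ have pairwise disjoint supports (across both the choice of block $(i,k)$ and the choice of $j$, since $A_0,A_1,A_2$ partition $\Omega\times\Omega$), so $\dim(T_{0,\omega})$ is simply the number of nonzero generators, equivalently the number of triples with $p^k_{ij}\neq 0$; the five blocks meeting $\Delta_0$ and the two off-diagonal interior blocks are forced by the subconstituent counts $k-1-\lambda$, $k-\mu$ and $v-2k+\lambda$, each of which is positive exactly because primitivity makes both $\Gamma$ and $\overline{\Gamma}$ connected and non-complete (your identification of $v-2k+\lambda$ as $\bar{\mu}$ is the right way to see the last one); and the only generators whose vanishing is not decided by primitivity are $E^*_1A_1E^*_1$ and $E^*_2A_2E^*_2$, which vanish precisely when $\lambda=0$, respectively $\bar{\lambda}=0$, i.e.\ when $\Gamma$, respectively $\overline{\Gamma}$, is triangle-free (constancy of $\lambda$ and $\bar{\lambda}$ converts ``some triangle'' into ``a triangle through $\omega$''). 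Your closed form $\dim(T_{0,\omega})=13+\varepsilon_\Gamma+\varepsilon_{\overline{\Gamma}}$ yields parts (1) and (2) immediately, and as a bonus it also covers the remaining case excluded from the statement (both $\Gamma$ and $\overline{\Gamma}$ triangle-free, e.g.\ the $5$-cycle, giving dimension $13$), consistent with the fuller statement of \cite[Proposition 2.5]{HMR25}. No gaps.
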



To compute $\dim(\widetilde{T}_{\omega})$, we use the \emph{block dimension decomposition} of $\widetilde{T}_{\omega}$, which is the matrix
\[
D(\widetilde{T}_{\omega}) = 
\begin{bmatrix}
d_{00} & d_{01} & d_{02} \\
d_{10} & d_{11} & d_{12} \\
d_{20} & d_{21} & d_{22}
\end{bmatrix},
 \text{ where } d_{ij} = \dim(E^*_{i,\omega} \widetilde{T}_{\omega} E^*_{j,\omega}).
\]
Since the idempotents $\{E^*_{i,\omega}\}$ are mutually orthogonal, the algebra $\widetilde{T}_{\omega}$ decomposes as a direct sum
\[
\widetilde{T}_{\omega} = \bigoplus_{0 \le i,j \le 2} E^*_{i,\omega} \widetilde{T}_{\omega} E^*_{j,\omega}.
\]
Therefore, we have 
\[\dim(\widetilde{T}_{\omega}) = \sum_{i,j} d_{ij}.\]
It was proved in \cite[Theorem 5.1]{HMR25} that triply-transitive strongly regular graphs are rank 3 graphs. For a rank $3$ graph, the structure of this decomposition is well-known, and its entries count certain orbitals of the point stabilizer. The following lemma is our primary tool for computing $\dim(\widetilde{T}_{\omega})$.

\begin{lemma}[{\cite[Theorem 3.2]{HMR25}}]\label{lem:block-decomp}
Let $\Gamma$ be a strongly regular graph with a rank $3$ automorphism group $G = \operatorname{Aut}(\Gamma)$. Fix $\omega \in \Omega$ and choose $\omega_1 \in \Delta_1(\omega)$ and $\omega_2 \in \Delta_2(\omega)$. Then the block dimension decomposition of $\widetilde{T}_{\omega}$ is:
\[
D(\widetilde{T}_{\omega}) = 
\begin{bmatrix}
1 & 1 & 1 \\
1 & r_1 & t \\
1 & t & r_2
\end{bmatrix},
\]
and consequently,
\[
\dim(\widetilde{T}_{\omega}) = 5 + r_1 + r_2 + 2t.
\]
Here, $r_i$ is the number of orbits of $G_{\omega} \cap G_{\omega_i}$ on $\Delta_i(\omega)$ ($i=1,2$), and $t$ is the number of orbits of $G_{\omega} \cap G_{\omega_2}$ on $\Delta_1(\omega)$.
\end{lemma}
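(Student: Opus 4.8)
The plan is to combine the standard description of a centralizer algebra in terms of orbits on ordered pairs with the block structure supplied by the dual idempotents. Write $H=G_\omega$. A matrix $X\in\mathrm{Mat}_{|\Omega|}(\mathbb{C})$ lies in $\widetilde{T}_\omega=\End_H(\mathbb{C}^{|\Omega|})$ precisely when $X_{\alpha,\beta}=X_{g\alpha,g\beta}$ for all $g\in H$ and all $\alpha,\beta\in\Omega$, i.e. when $X$ is constant on the $H$-orbits on $\Omega\times\Omega$; hence the $0/1$ indicator matrices $M_{\mathcal{O}}$ of these orbits $\mathcal{O}$ form a $\mathbb{C}$-basis of $\widetilde{T}_\omega$. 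Since $\omega$ is fixed by $H$, each subconstituent $\Delta_i(\omega)$ is $H$-invariant, so $\Delta_i(\omega)\times\Delta_j(\omega)$ is a union of $H$-orbits on $\Omega\times\Omega$. Consequently $E^*_{i,\omega}M_{\mathcal{O}}E^*_{j,\omega}$ equals $M_{\mathcal{O}}$ if $\mathcal{O}\subseteq\Delta_i(\omega)\times\Delta_j(\omega)$ and $0$ otherwise, whence $E^*_{i,\omega}\widetilde{T}_\omega E^*_{j,\omega}=\Span\{M_{\mathcal{O}}:\mathcal{O}\subseteq\Delta_i(\omega)\times\Delta_j(\omega)\}$ and $d_{ij}$ is exactly the number of $H$-orbits on $\Delta_i(\omega)\times\Delta_j(\omega)$.

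Next I would compute these nine orbit counts using the rank $3$ hypothesis. Since $G=\Aut(\Gamma)$ has rank $3$, the $H$-orbits on $\Omega$ are precisely $\Delta_0(\omega)=\{\omega\}$, $\Delta_1(\omega)$, $\Delta_2(\omega)$; in particular $H$ is transitive on each $\Delta_i(\omega)$. The blocks meeting $\Delta_0(\omega)$ are immediate: $d_{00}=1$, and $d_{0j}=d_{j0}$ equals the number of $H$-orbits on $\Delta_j(\omega)$, which is $1$ for $j=1,2$. For $d_{11}$ I invoke the classical bijection: when a group acts transitively on a set $Y$ with point stabilizer $H_y$, the $H$-orbits on $Y\times Y$ correspond bijectively to the $H_y$-orbits on $Y$ via $\mathcal{O}\mapsto\{z:(y,z)\in\mathcal{O}\}$. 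Taking $Y=\Delta_1(\omega)$, $y=\omega_1$ gives $d_{11}=r_1$, and taking $Y=\Delta_2(\omega)$, $y=\omega_2$ gives $d_{22}=r_2$. For $d_{12}$ I use the analogous statement for $H$-orbits on $Y\times Z$ with $H$ transitive on $Z$, namely that they correspond to $H_z$-orbits on $Y$: with $Z=\Delta_2(\omega)$, $z=\omega_2$, $Y=\Delta_1(\omega)$ this yields $d_{12}=t$. Finally $d_{21}=d_{12}$, since the transpose map $(\alpha,\beta)\mapsto(\beta,\alpha)$ is an $H$-equivariant bijection $\Delta_1(\omega)\times\Delta_2(\omega)\to\Delta_2(\omega)\times\Delta_1(\omega)$ and hence induces a bijection on orbit sets. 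Assembling the entries, $D(\widetilde{T}_\omega)$ is the claimed matrix and $\dim\widetilde{T}_\omega=(1+1+1)+(1+1)+r_1+r_2+2t=5+r_1+r_2+2t$.

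I do not expect a serious obstacle here; the argument is essentially bookkeeping once the orbit-basis description of $\widetilde{T}_\omega$ is in place. The two points deserving a line of care are (i) the fact that the dual-idempotent projections do not split $H$-orbits, which is exactly the $H$-invariance of the subconstituents and is where the structure of $\Delta_i(\omega)$ (together with $H$ fixing $\omega$) enters; and (ii) the two legitimate ways of reading off $d_{12}$ — slicing from the $\Delta_1(\omega)$ side versus the $\Delta_2(\omega)$ side — which necessarily agree, but which it is worth recording explicitly, since in the applications one will want the freedom to count $G_\omega\cap G_{\omega_2}$-orbits on $\Delta_1(\omega)$ or, equivalently, $G_\omega\cap G_{\omega_1}$-orbits on $\Delta_2(\omega)$, whichever is easier.
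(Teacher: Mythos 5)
Your proof is correct: the orbital-basis description of $\End_{G_\omega}(\mathbb{C}^{|\Omega|})$, the observation that the dual idempotents cut this basis according to the $G_\omega$-invariant partition $\{\omega\}\cup\Delta_1(\omega)\cup\Delta_2(\omega)$ (which is exactly the orbit partition under the rank~$3$ hypothesis), and the standard bijection between orbits on $\Delta_i(\omega)\times\Delta_j(\omega)$ and suborbits of the point stabilizers give precisely the claimed matrix and dimension count. Note that the paper does not prove this lemma at all — it is imported verbatim from \cite[Theorem 3.2]{HMR25} — so there is no in-paper argument to compare against; your derivation is the expected one and fills in that citation self-containedly, with the only implicit hypotheses being that $\Delta_1(\omega)$ and $\Delta_2(\omega)$ are nonempty, which is already built into the choice of $\omega_1$ and $\omega_2$.
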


In Section \ref{sec_main}, we will apply Lemma \ref{lem:block-decomp} to the collinearity graph of $\mathcal{Q}^-(5,q)$ and the affine polar graph $\mathrm{VO}_{2m}^\varepsilon(2)$. In both cases, we will show that 
\[D(\widetilde{T}_{\omega}) = \begin{bmatrix}1&1&1\\1&3&2\\1&2&3\end{bmatrix},\] so that $\dim(\widetilde{T}_{\omega}) = 15$. Since these graphs satisfy the conditions of Lemma \ref{lem:dimT0}, this proves that they are triply-transitive.

\subsection{A technical lemma in finite fields}\ 

In the final part of this section, we include a basic result on finite fields for later use. Let $q$ be a prime power and $\F_q$ the finite field of order $q$. Denote by $\mathbb{F}_{q}^{*}$ the multiplicative group of $\F_{q}$. When $q$ is odd, we define
\[
\square_{q}=\{x^{2}:x\in\mathbb{F}_{q}^{*}\},\quad \blacksquare_{q}=\mathbb{F}_{q}^{*}\setminus\square_{q}.
\]

\begin{lemma}\label{lem_tech}
Suppose that $q$ is odd. Let $\delta\in\blacksquare_{q}$. Define $f(x,y)=x^{2}-\delta y^{2}$. Then for any $\lambda\in\blacksquare_{q}$, there exist $a,c\in\mathbb{F}_{q}$ such that $f(a,c)=\lambda$. Moreover, let $g$ be a linear map on $\mathbb{F}_{q}^2$ defined by
\[
g:(x,y)\mapsto(ax+c\delta y,cx+ay).
\]
Then we have $f(g(x,y))=\lambda f(x,y)$.
\end{lemma}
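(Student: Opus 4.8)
The statement has two parts. First, a surjectivity claim: the quadratic form $f(x,y)=x^2-\delta y^2$ with $\delta$ a non-square represents every non-square $\lambda$. Second, an explicit ``multiplier'' claim: once $f(a,c)=\lambda$, the linear map $g$ scales $f$ by the factor $\lambda$.

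For the surjectivity part, the plan is to recognize $f$ as the norm form of the quadratic extension $\F_{q^2}=\F_q(\sqrt\delta)$ (this is a genuine extension precisely because $\delta$ is a non-square). Writing $z=x+y\sqrt\delta$, one has $f(x,y)=z\cdot \bar z = z^{q+1}=\N_{\F_{q^2}/\F_q}(z)$, where $\bar z = z^q$ is the Galois conjugate. The norm map $\N\colon \F_{q^2}^*\to\F_q^*$ is a surjective group homomorphism, since its image is a subgroup of $\F_q^*$ whose index is at most $|\ker\N|\le q+1$, and $|\F_{q^2}^*|/|\F_q^*|=q+1$ forces the image to be all of $\F_q^*$ (alternatively, $\N(z)=z^{(q^2-1)/(q-1)}$ is a $(q+1)$-power map from a cyclic group of order $q^2-1$, hence surjective onto $\F_q^*$). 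In particular every $\lambda\in\F_q^*$ — a fortiori every $\lambda\in\blacksquare_q$ — is of the form $f(a,c)$ for suitable $a,c\in\F_q$. (I do not even need the hypothesis $\lambda\in\blacksquare_q$ for this direction; the lemma is stated that way only because that is the case used later.)

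For the multiplier identity, the clean route is again the norm interpretation: the map $g$ is exactly multiplication by the fixed element $\zeta=a+c\sqrt\delta$ on $\F_{q^2}$, viewed $\F_q$-linearly in the basis $\{1,\sqrt\delta\}$. Indeed $(a+c\sqrt\delta)(x+y\sqrt\delta)=(ax+c\delta y)+(cx+ay)\sqrt\delta$, which matches the coordinates of $g(x,y)$. Since the norm is multiplicative, $f(g(x,y))=\N(\zeta z)=\N(\zeta)\N(z)=f(a,c)\,f(x,y)=\lambda f(x,y)$, as claimed. If one prefers to avoid invoking field-extension language in the write-up, the identity $f(g(x,y))=\lambda f(x,y)$ can equally be verified by a direct (if slightly tedious) expansion, using $a^2-\delta c^2=\lambda$ to collapse the cross terms; the norm viewpoint is simply the conceptual reason this works.

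I do not anticipate a serious obstacle here; the only mild subtlety is making sure the norm-surjectivity argument is phrased correctly (e.g. being careful that $\F_q(\sqrt\delta)$ really is degree $2$, which is where $\delta\notin\square_q$ enters). The computational multiplier identity is routine. I would present the norm-based proof as the main argument, since it makes both halves transparent and immediately explains the role of the formula for $g$.
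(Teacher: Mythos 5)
Your proof is correct, and it takes a genuinely different route from the paper. The paper argues the representability of $\lambda$ by an elementary counting argument split according to $q \bmod 4$: if $q\equiv 1 \pmod 4$ one takes $a=0$ and solves $-\delta c^2=\lambda$ directly; if $q\equiv 3\pmod 4$ one observes that $\{\lambda+\delta c^{2}: c\in\F_q\}$ is a set of $\frac{q+1}{2}$ nonzero elements and hence must meet the $\frac{q-1}{2}$-element complement of the nonsquares, i.e.\ contain a square $a^2$. The multiplier identity $f(g(x,y))=\lambda f(x,y)$ is then checked by direct expansion. Your argument instead identifies $f$ with the norm form of $\F_{q^2}=\F_q(\sqrt{\delta})$ and $g$ with multiplication by $a+c\sqrt{\delta}$, so that both halves follow at once from surjectivity and multiplicativity of the norm $\N_{\F_{q^2}/\F_q}$. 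This is cleaner and in fact stronger: it shows $f$ represents every $\lambda\in\F_q^{*}$, not only the nonsquares, and it explains conceptually why the specific matrix defining $g$ works, whereas the paper's computation leaves that opaque. What the paper's version buys in exchange is that it stays entirely inside $\F_q$ with no field-extension machinery, consistent with the paper labelling this a ``basic result on finite fields.'' The one point to phrase carefully in your write-up, which you already flag, is that $\delta\in\blacksquare_q$ is what guarantees $[\F_q(\sqrt{\delta}):\F_q]=2$ and that the Galois conjugate of $x+y\sqrt{\delta}$ is $x-y\sqrt{\delta}$; with that stated, your proof is complete.
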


\begin{proof}
If $q\equiv 1\pmod{4}$, we have $-1\in\square_{q}$. Then there exists $c\in\mathbb{F}_{q}$ such that $c^{2}=-\lambda\delta^{-1}$. Hence there exists a pair $(0,c) $ such that $f(0,c)=-\delta c^2=\lambda $. On the other hand, if $q\equiv 3\pmod{4}$, we have $-1\in\blacksquare_{q}$. Then $\lambda+\delta c^{2}\neq 0$ for any $c\in\mathbb{F}_{q}$. It follows that the set $\{\lambda+\delta c^{2}:c\in\mathbb{F}_{q}\}$ has cardinality $\frac{q+1}{2}$ in $\mathbb{F}_{q}^{*}$. It must contain a square in $\mathbb{F}_{q}^{*}$ since there are exactly $\frac{q-1}{2}$ nonsquares in $\mathbb{F}_{q}^{*}$. As a consequence, there exist $a,c\in\mathbb{F}_{q}$ such that $\lambda+\delta c^{2}=a^{2}$. This establishes the existence of a pair $(a,c)$ with $a^{2}-c^{2}\delta=\lambda$.

The remaining part follows from a direct computation that
\begin{align*}
f(g(x,y))&=f(ax+c\delta y,cx+ay )=(ax+c\delta y)^{2}-\delta(cx+ay)^{2}\\&=(a^{2}-\delta c^{2})x^{2}-\delta(a^{2}-\delta c^{2})y^{2}=\lambda f(x,y).
\end{align*}
This completes the proof.
\end{proof}

\section{Proof of the main theorem}\label{sec_main}
In this section, we prove Conjectures \ref{conj1} and \ref{conj2}, thereby completing the proof of the main classification theorem. We will prove that the two remaining families of strongly regular graphs are indeed triply-transitive. Before presenting the proofs, let us fix some notation that will be used frequently in this section.

Let $\Gamma$ be a strongly regular graph and let $G$ be its automorphism group. For a vertex $\omega$, let $\Delta_i=\Delta_i(\omega)$ for $i=0,1,2$. If $G$ is transitive, then $T_{0,\omega}\cong T_{0,\omega'}$, $T_{\omega}\cong T_{\omega'}$ and $\widetilde{T}_{\omega}\cong \widetilde{T}_{\omega'}$ for different vertices $\omega$ and $\omega'$, and therefore we will simplify the notation to $T_0, T$, and $\widetilde{T}$, respectively.

\subsection{Triply-transitivity of the collinearity graph of {$\mathcal{Q}^{-}(5,q)$}}\

We first recall the definition of the polar space $\mathcal{Q}^-(5,q)$ and its collinearity graph. Let $q$ be a prime power. Set $V=\mathbb{F}_{q}^{6}$ and view it as a 6-dimensional vector space over $\mathbb{F}_{q}$. Define a map $Q(\bm{x})$ on $V$ by
\begin{equation}
Q(\boldsymbol{x})=x_{1}x_{2}+x_{3}x_{4}+f(x_{5},x_{6}),
\end{equation}
where $f(x_{5},x_{6})$ is an irreducible polynomial over $\mathbb{F}_{q}$, which will be made precise in certain case later. Then $Q$ is a nondegenerate quadratic form of minus type, and it defines the polar space $\mathcal{Q}^{-}(5,q)$ which is also known as a finite classical generalized quadrangle of order $(q,q)$. We call a vector $\bm{x}\in V$ {\it singular} if $Q(\bm{x})=0$. The point set $\mathcal{P}$ of $\mathcal{Q}^{-}(5,q)$ consists of all 1-dimensional subspaces of $V$ generated by nonzero singular vectors, that is, 
\[
\mathcal{P}=\{\langle\boldsymbol{x}\rangle:\boldsymbol{x}\in V\setminus\{\bm{0}\} \mid Q(\boldsymbol{x})=0\}.
\]
The associated bilinear form $B:V\times V\to\mathbb{F}_{q}$ of $Q$ is defined by
\begin{equation}
B(\boldsymbol{x},\boldsymbol{y})=Q(\boldsymbol{x}+\boldsymbol{y})-Q(\boldsymbol{x})-Q(\boldsymbol{y}).
\end{equation}
Two points $\langle\boldsymbol{x}\rangle,\langle\boldsymbol{y}\rangle\in\mathcal{P}$ are collinear if and only if $B(\boldsymbol{x},\boldsymbol{y})=0$. We define
\[
\bm{x}^{\perp}:=\{\langle\boldsymbol{y}\rangle:\boldsymbol{y}\in V\mid B(\boldsymbol{x},\boldsymbol{y})=0\}.
\]

Let $\Gamma$ be the collinearity graph of $\mathcal{Q}^{-}(5,q)$. Then the vertices of $\Gamma$ are the points of $\mathcal{Q}^{-}(5,q)$, and two vertices are adjacent in $\Gamma$ if and only if they are collinear in $\mathcal{Q}^{-}(5,q)$. 

Let $\{\boldsymbol{e}_{1},\ldots,\boldsymbol{e}_{6}\}$ be the standard basis of $V$. Consider the following vertices of $\Gamma$:
\begin{align*}
\omega &=\langle\boldsymbol{e}_{1}\rangle=\langle(1,0,0,0,0,0)\rangle, \\
\omega_{1} &=\langle\boldsymbol{e}_{3}\rangle=\langle(0,0,1,0,0,0)\rangle, \\
\omega_{2} &=\langle\boldsymbol{e}_{2}\rangle=\langle(0,1,0,0,0,0)\rangle.
\end{align*}
By (3.1) and (3.2), we deduce that
\begin{align*}
\Delta_1&=\{\langle(x_{1},0,x_{3},x_{4},x_{5},x_{6})\rangle\in\mathcal{P}\}, \ \omega_1\in \Delta_1,\\
\Delta_2 &=\{\langle(x_{1},1,x_{3},x_{4},x_{5},x_{6})\rangle\in\mathcal{P}\},\  \omega_2\in \Delta_2.
\end{align*}

Let $G$ be the automorphism group of $\Gamma$. Then $G=\operatorname{P\Gamma O}_{6}^{-}(q)$ (see \cite[2.6.5]{Srgs22}). Denote by $G_{\omega}$ and $G_{\omega_{i}}$ the stabilizer of $\omega$ and $\omega_{i}$ in $G$, respectively. In what follows, we determine the orbits of the subgroups $G_{\omega}\cap G_{\omega_{i}}$ of $G$ on the sets $\Delta_{j}$ for $i,j\in\{1,2\}$. Denote by $H_{i}=G_{\omega}\cap G_{\omega_{i}}$ for $i=1,2$.

Set $W=\omega^{\perp}\cap\omega_{2}^{\perp}=\langle\boldsymbol{e}_{3},\boldsymbol{e}_{4},\boldsymbol{e}_{5},\boldsymbol{e}_{6}\rangle=\mathbb{F}_{q}^{4}$ and let $\widetilde{Q}=Q|_{W}$ denote the restriction of the quadratic form $Q$ to $W$. Then $(W,\widetilde{Q})$ defines a nondegenerate elliptic quadric $\mathcal{Q}^{\prime}=\mathcal{Q}^{-}(3,q)$.  Consider the following maps $\phi_{\lambda}$ on $V$ defined by
\begin{equation}\label{eq_phifunc}
\phi_{\lambda}(\boldsymbol{x})=(\lambda x_{1},\lambda^{-1}x_{2},x_{3},x_{4},x_{5},x_{6}),\quad \forall\lambda\in\mathbb{F}_{q}^{*}.
\end{equation}
It is clear that $\{\phi_{\lambda}:\lambda\in\mathbb{F}_{q}^{*}\}$ stabilizes $\omega,\omega_{1},\omega_{2}$, and thus it is a subgroup of order $q-1$ contained in $H_{1}\cap H_{2}$.

\begin{lemma}\label{lem_H2D1_1}
The subgroup $H_{2}$ has two orbits on $\Delta_1$.
\end{lemma}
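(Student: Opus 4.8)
The plan is to exhibit an explicit set of orbit representatives for $H_2 = G_\omega \cap G_{\omega_2}$ on $\Delta_1$ and then prove that there are exactly two orbits by (i) producing group elements in $H_2$ that merge candidate representatives into at most two classes, and (ii) producing an $H_2$-invariant that separates those two classes. Recall $\Delta_1$ consists of singular points $\langle(x_1,0,x_3,x_4,x_5,x_6)\rangle$; since $Q$ restricted to this subspace is $x_1\cdot 0 + x_3x_4 + f(x_5,x_6) = x_3x_4+f(x_5,x_6)$, a point of $\Delta_1$ is singular precisely when $x_3x_4 + f(x_5,x_6) = 0$, with the $x_1$-coordinate free. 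The natural invariant to consider is whether the point lies in $\omega_2^\perp$, i.e. whether $x_1 = 0$ (equivalently, whether the point is collinear with $\omega_2$). This gives the two candidate orbits: $\mathcal{O}_1 = \{\langle(x_1,0,x_3,x_4,x_5,x_6)\rangle \in \Delta_1 : x_1 = 0\}$ and $\mathcal{O}_2 = \{\langle(x_1,0,x_3,x_4,x_5,x_6)\rangle \in \Delta_1 : x_1 \neq 0\}$. (One should note that $\omega_1 = \langle\boldsymbol{e}_3\rangle$ lies in $\mathcal{O}_1$, and that both sets are nonempty — e.g. $\langle\boldsymbol{e}_3 + \boldsymbol{e}_1\rangle \in \mathcal{O}_2$ since $Q(\boldsymbol{e}_1 + \boldsymbol{e}_3) = 0$.)

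First I would show $H_2$ preserves the partition $\{\mathcal{O}_1, \mathcal{O}_2\}$: an element of $H_2$ fixes $\langle\boldsymbol{e}_1\rangle$ and $\langle\boldsymbol{e}_2\rangle$, hence preserves $\omega_2^\perp = \{x_1 = 0\}$, so it cannot send a point with $x_1 = 0$ to one with $x_1 \neq 0$. Next I would show $H_2$ is transitive on each of $\mathcal{O}_1$ and $\mathcal{O}_2$. For $\mathcal{O}_1$: here the points lie in $W' := \langle\boldsymbol{e}_2,\boldsymbol{e}_3,\boldsymbol{e}_4,\boldsymbol{e}_5,\boldsymbol{e}_6\rangle \cap \{x_1=0\}$, and the subgroup of $H_2$ acting on $W = \langle\boldsymbol{e}_3,\boldsymbol{e}_4,\boldsymbol{e}_5,\boldsymbol{e}_6\rangle$ as the full isometry group $\GO_4^-(q)$ of $\widetilde{Q} = \mathcal{Q}^-(3,q)$ — trivially extended to fix $\boldsymbol{e}_1,\boldsymbol{e}_2$ — is contained in $H_2$, and this group is transitive on the singular points of $\mathcal{Q}^-(3,q)$, which is exactly $\mathcal{O}_1$. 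For $\mathcal{O}_2$: given a representative $\langle(x_1,0,x_3,x_4,x_5,x_6)\rangle$ with $x_1 \neq 0$, the scaling map $\phi_\lambda$ of \eqref{eq_phifunc} with $\lambda = x_1^{-1}$ normalizes the first coordinate to $1$; then I would use the same $\GO_4^-(q)$ acting on the $(x_3,x_4,x_5,x_6)$-block, together with transvection-type maps in $H_2$ of the form $\boldsymbol{e}_i \mapsto \boldsymbol{e}_i$ ($i \neq 2$), $\boldsymbol{e}_2 \mapsto \boldsymbol{e}_2 + (\text{linear in } \boldsymbol{e}_1) + (\text{terms in } W)$ chosen so as to absorb the $\boldsymbol{e}_1$-component against any prescribed nonsingular vector of $W$, to move any $x_1 \neq 0$ representative to a fixed one such as $\langle\boldsymbol{e}_1 + \boldsymbol{e}_3\rangle$. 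Combining, $H_2$ has exactly the two orbits $\mathcal{O}_1$ and $\mathcal{O}_2$.

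The main obstacle I anticipate is the transitivity on $\mathcal{O}_2$: unlike $\mathcal{O}_1$, where the relevant points sit inside a nondegenerate subspace on which a classical group acts transitively, a point of $\mathcal{O}_2$ has a nonzero component along $\langle\boldsymbol{e}_1\rangle$ (which is totally singular and radical-like with respect to the restricted form), so one cannot simply invoke Witt's theorem on a nondegenerate space. The fix is to first rescale via $\phi_\lambda$ to make $x_1 = 1$, observe that $Q(\boldsymbol{e}_1 + \boldsymbol{w}) = Q(\boldsymbol{w}) = \widetilde{Q}(\boldsymbol{w})$ for $\boldsymbol{w} \in W$ so that singular points of $\mathcal{O}_2$ correspond to $\widetilde{Q}(\boldsymbol{w}) = 0$ with the affine shift by $\boldsymbol{e}_1$, and then show that the stabilizer in $\GO_6^-(q)$ of both $\langle\boldsymbol{e}_1\rangle$ and $\langle\boldsymbol{e}_2\rangle$ still surjects onto the isometry group of $\widetilde{Q}$ acting on $W$ (the block form of $Q$ is the orthogonal direct sum of the hyperbolic plane $\langle\boldsymbol{e}_1,\boldsymbol{e}_2\rangle$ and $W$, so $1 \oplus 1 \oplus \GO_4^-(q)$ embeds into $H_2$). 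Since $\GO_4^-(q)$ is transitive on the singular points of $\mathcal{Q}^-(3,q)$, and $\phi_\lambda$ handles the scalar, transitivity on $\mathcal{O}_2$ follows. I would keep the exposition at the level of "here are the subgroups of $H_2$ we use and why each orbit is covered," deferring the routine verifications that the stated maps are indeed isometries fixing $\omega$ and $\omega_2$.
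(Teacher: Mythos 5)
Your proposal is correct and takes essentially the same approach as the paper: the two orbits are the singular points lying in $W$ (those with $x_1=0$) and those with $x_1\neq 0$, and transitivity on each is obtained from the subgroup of $H_2$ acting as the isometry group of $(W,\widetilde{Q})$ (transitive on singular points, via Witt's theorem) combined with the scaling maps $\phi_\lambda$ of \eqref{eq_phifunc} to adjust the $\boldsymbol{e}_1$-coefficient, which is exactly the paper's argument. The only blemish is the proposed ``transvection-type'' maps fixing $\boldsymbol{e}_i$ for all $i\neq 2$: nontrivial maps of that form are not isometries, and they would act trivially on $\Delta_1\subseteq\langle\boldsymbol{e}_1\rangle\oplus W$ anyway, but they are superfluous since your closing reduction using $\GO_4^-(q)$ on $W$ together with $\phi_\lambda$ already suffices.
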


\begin{proof}
Since $H_{2}=G_\omega\cap G_{\omega_2}$, it fixes $\omega$ and $\omega_{2}$, and thus fixes $W$. Let $Z_{1}$ be the set of all singular points in $W$ with respect to $\widetilde{Q}$, and set $Z_{2}=\Delta_1\setminus Z_{1}$. Then $H_{2}$ contains a subgroup $K=\operatorname{PGO}_{4}^{-}(q)$ that acts transitively on $Z_{1}$ (see \cite[Theorem 11.28]{Taylor}). We will show that $Z_{2}$ is also an orbit of $H_{2}$.

Given two vertices $\gamma_{1},\gamma_{2}\in Z_{2}$, there exist $\lambda,\mu\in\mathbb{F}_{q}^{*}$ and two singular vectors $\boldsymbol{u},\boldsymbol{v}\in W$ such that $\gamma_{1}=\langle\lambda\boldsymbol{e}_{1}+\boldsymbol{u}\rangle$ and $\gamma_{2}=\langle\mu\boldsymbol{e}_{1}+\boldsymbol{v}\rangle$. By Witt's Theorem (see \cite[Theorem 7.4]{Taylor}), there exists $\varphi\in K$ such that $\varphi(\langle\boldsymbol{u}\rangle)=\langle\boldsymbol{v}\rangle$. Assume that $\varphi(\langle\boldsymbol{e}_{1}+\boldsymbol{u}\rangle)=\langle\alpha\boldsymbol{e}_{1}+\boldsymbol{v}\rangle$ for some $\alpha\in\mathbb{F}_{q}^{*}$. Then
\[
\phi_{\mu\alpha^{-1}}\circ\varphi\circ\phi_{\lambda^{-1}}(\gamma_{1})=\phi_{\mu\alpha^{-1}}\circ\varphi(\langle\boldsymbol{e}_{1}+\boldsymbol{u}\rangle)=\phi_{\mu\alpha^{-1}}(\langle\alpha\boldsymbol{e}_{1}+\boldsymbol{v}\rangle)=\gamma_{2},
\]
where $\phi_{\mu\alpha^{-1}},\phi_{\lambda^{-1}}$ are defined as in \eqref{eq_phifunc}. This shows that $H_{2}$ acts transitively on $Z_{2}$. Therefore, $H_{2}$ has two orbits on $\Delta_1$.
\end{proof}

\begin{lemma}\label{lem_H2D2_1}
The subgroup $H_{2}$ has three orbits on $\Delta_2$.
\end{lemma}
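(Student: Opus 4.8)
The plan is to determine the orbits of $H_2 = G_\omega \cap G_{\omega_2}$ on $\Delta_2 = \{\langle(x_1,1,x_3,x_4,x_5,x_6)\rangle \in \mathcal{P}\}$, and to show there are exactly three of them. Since $H_2$ fixes $\omega$ and $\omega_2$ it preserves $W = \omega^\perp \cap \omega_2^\perp = \langle \boldsymbol{e}_3,\ldots,\boldsymbol{e}_6\rangle$ with its elliptic form $\widetilde{Q}$, and also the decomposition $V = \langle \boldsymbol{e}_1,\boldsymbol{e}_2\rangle \perp W$. The natural first step is to find the right invariant that distinguishes the orbits. For a point $\langle \boldsymbol{e}_2 + \lambda \boldsymbol{e}_1 + \boldsymbol{w}\rangle \in \Delta_2$ with $\boldsymbol{w} \in W$, singularity forces $\lambda = -\widetilde{Q}(\boldsymbol{w})$, so the point is determined by $\boldsymbol{w}$ alone, i.e.\ $\Delta_2$ is parametrized by all of $W$. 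The invariant I expect to be relevant is whether $\boldsymbol{w} = \boldsymbol{0}$, whether $\boldsymbol{w} \neq \boldsymbol{0}$ is singular for $\widetilde{Q}$, or whether $\boldsymbol{w}$ is non-singular for $\widetilde{Q}$; this gives the three candidate orbits $O_0 = \{\omega_2\}$ (i.e.\ $\boldsymbol{w}=\boldsymbol{0}$), $O_1$ (nonzero singular $\boldsymbol{w}$), $O_2$ (non-singular $\boldsymbol{w}$).

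Next I would exhibit enough automorphisms in $H_2$ to show each of $O_1$ and $O_2$ is a single orbit (that $O_0$ is an orbit is trivial, being a fixed point). As in the proof of Lemma~\ref{lem_H2D1_1}, $H_2$ contains a copy of $K = \mathrm{PGO}_4^-(q)$ acting on $W$, which by Witt's theorem is transitive on the nonzero singular vectors of $W$ up to scalars and transitive on non-singular vectors of each fixed norm. For $O_1$: if $\boldsymbol{u}, \boldsymbol{v} \in W$ are nonzero singular, the corresponding points are $\langle \boldsymbol{e}_2 + \boldsymbol{u}\rangle$ and $\langle \boldsymbol{e}_2 + \boldsymbol{v}\rangle$ (here $\lambda = 0$), and one lifts an element of $K$ taking $\langle \boldsymbol{u}\rangle$ to $\langle \boldsymbol{v}\rangle$, then corrects the scalar using the $\phi_\mu$ maps from \eqref{eq_phifunc} exactly as in Lemma~\ref{lem_H2D1_1}; the point to check is that $\phi_\mu$ fixes $\omega_2 = \langle \boldsymbol{e}_2\rangle$, which it does not (it scales $\boldsymbol{e}_2$), but scaling $\boldsymbol{e}_2$ fixes the projective point $\omega_2$, so this is fine. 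For $O_2$: given non-singular $\boldsymbol{u},\boldsymbol{v} \in W$, we must move $\langle \boldsymbol{e}_2 - \widetilde{Q}(\boldsymbol{u})\boldsymbol{e}_1 + \boldsymbol{u}\rangle$ to $\langle \boldsymbol{e}_2 - \widetilde{Q}(\boldsymbol{v})\boldsymbol{e}_1 + \boldsymbol{v}\rangle$. Witt's theorem on the full space $V$ handles $\widetilde{Q}(\boldsymbol{u}) = \widetilde{Q}(\boldsymbol{v})$; the remaining difficulty, and the step I expect to be the main obstacle, is when $\widetilde{Q}(\boldsymbol{u})$ and $\widetilde{Q}(\boldsymbol{v})$ lie in different square classes of $\F_q^*$. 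Here I would use the hyperbolic plane $\langle \boldsymbol{e}_1,\boldsymbol{e}_2\rangle$ together with a two-dimensional piece of $W$ to build an isometry scaling norms: concretely, if $f(x_5,x_6) = x_5^2 - \delta x_6^2$ with $\delta$ a non-square, then Lemma~\ref{lem_tech} supplies, for any non-square $\lambda$, a linear map $g$ on $\langle \boldsymbol{e}_5,\boldsymbol{e}_6\rangle$ with $f(g(\boldsymbol{x})) = \lambda f(\boldsymbol{x})$; combining $g$ with the map $(x_1,x_2) \mapsto (\lambda^{-1}x_1, \lambda x_2)$ on the hyperbolic plane (which also scales $x_1 x_2$ by... well, fixes it — so instead one uses $x_1 \mapsto \lambda x_1$, $x_2 \mapsto x_2$ composed appropriately) gives an element of $G_\omega \cap G_{\omega_2}$ that multiplies $\widetilde{Q}|_{\langle \boldsymbol{e}_5,\boldsymbol{e}_6\rangle}$ by $\lambda$ while fixing the projective points $\omega, \omega_2$. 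Chaining this with elements of $K$ then connects the two square classes.

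Finally I would verify that these three sets are genuinely distinct $H_2$-orbits, i.e.\ that no element of $H_2$ maps $O_i$ to $O_j$ for $i \neq j$: since $H_2$ preserves $W$ and the quadratic form $\widetilde{Q}$ up to scalar (indeed up to a possibly nontrivial field automorphism as well, since $G = \mathrm{P}\Gamma\mathrm{O}_6^-(q)$), the property ``$\boldsymbol{w} = \boldsymbol{0}$'' and the property ``$\boldsymbol{w} \neq \boldsymbol{0}$ singular'' are each clearly invariant, and ``$\boldsymbol{w}$ non-singular'' is the complement; care is needed only to confirm that scalars and Frobenius cannot turn a singular vector into a non-singular one, which is immediate since $\widetilde{Q}(\boldsymbol{w}) = 0 \iff \widetilde{Q}(c\,\boldsymbol{w}^{\sigma}) = 0$. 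Combining the existence part (three orbits suffice to cover $\Delta_2$) with the distinctness part yields exactly three orbits, completing the proof. The only genuinely delicate computation is the norm-scaling construction, which is precisely what Lemma~\ref{lem_tech} was designed to handle, so I would lean on it and keep the bookkeeping of which coordinates get scaled as transparent as possible.
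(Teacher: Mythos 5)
Your route is the same as the paper's: parametrize $\Delta_2$ by the $W$-component $\boldsymbol{w}$ (with $x_1=-\widetilde{Q}(\boldsymbol{w})$ forced), take the three candidate orbits $\{\omega_2\}$, nonzero singular $\boldsymbol{w}$, nonsingular $\boldsymbol{w}$, use the $\mathrm{PGO}_4^-(q)$-subgroup and Witt extension for transitivity statements, and use Lemma \ref{lem_tech} to build a similarity with non-square factor that fuses the two square classes of norms (the paper treats $q$ even separately, where no square-class issue arises). Two concrete points in your sketch, however, need repair. First, the norm-scaling map as you describe it is not an element of $G$: scaling $x_1$ by $\lambda$ and applying $g$ to $(x_5,x_6)$ while fixing $x_2,x_3,x_4$ multiplies $x_1x_2$ and $f(x_5,x_6)$ by $\lambda$ but leaves $x_3x_4$ untouched, so it does not preserve the quadric (e.g.\ the singular vector $(0,0,1,-1,1,0)$ is sent to a nonsingular one). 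You must scale a coordinate of the second hyperbolic plane as well; the paper's similarity is $\rho(\boldsymbol{x})=(\alpha x_1,x_2,\alpha x_3,x_4,ax_5+\delta cx_6,cx_5+ax_6)$ with $\alpha=a^2-\delta c^2$, which satisfies $Q(\rho(\boldsymbol{x}))=\alpha Q(\boldsymbol{x})$.

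Second, your case analysis on $O_2$ covers equal norms (Witt) and norms in different square classes (the similarity), but omits the case $\widetilde{Q}(\boldsymbol{u})\neq\widetilde{Q}(\boldsymbol{v})$ with both in the same square class (e.g.\ norms $1$ and $4$); note that isometries together with a single similarity of factor $\alpha$ only reach norms in the coset $\langle\alpha\rangle\,\widetilde{Q}(\boldsymbol{u})$, and $\langle\alpha\rangle$ need not exhaust $\F_q^*$ (take $q=13$, $\alpha=5$). This gap is closed by the same device you already used for $O_1$: the map $\phi_\mu$ of \eqref{eq_phifunc} sends the point parametrized by $\boldsymbol{w}$ to the one parametrized by $\mu\boldsymbol{w}$, hence multiplies the norm by an arbitrary square (alternatively, compose two similarities with suitably chosen non-square factors). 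The same remark disposes of even $q$, where every nonzero norm is a square and this is the only nontrivial step. With these two repairs your argument coincides with the paper's proof.
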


\begin{proof}
Recall that $\widetilde{Q}=Q|_{W}$. Then for any vertex $\langle(x_{1},1,x_{3},x_{4},x_{5},x_{6})\rangle\in\Delta_2$, we have $x_{1}=-\widetilde{Q}(x_{3},x_{4},x_{5},x_{6})$. Consider the following three subsets of $\Delta_2$:
\begin{align*}
X_{1} &=\{\langle(0,1,0,0,0,0)\rangle\}=\{\omega_{2}\}, \\
X_{2} &=\{\langle(0,1,x_{3},x_{4},x_{5},x_{6})\rangle\in\Delta_2:\widetilde{Q}((x_{3},x_{4},x_{5},x_{6}))=0\}\setminus\{\omega_{2}\}, \\
X_{3} &=\{\langle(x_{1},1,x_{3},x_{4},x_{5},x_{6})\rangle\in\Delta_2:\widetilde{Q}((x_{3},x_{4},x_{5},x_{6}))=-x_{1}\neq 0\}.
\end{align*}
Note that $(W,\widetilde{Q})$ is a non-degenerate orthogonal space of minus type, which is stabilized by $H_2$. Thus $H_{2}$ contains a subgroup that acts transitively on $X_{2}$. It follows that $X_{1}$ and $X_{2}$ are two $H_{2}$-orbits.

Next, we consider the action of $H_{2}$ on $X_{3}$. First, we assume that $q$ is even. Let $\boldsymbol{u},\boldsymbol{v}\in W$ such that $\widetilde{Q}(\boldsymbol{u})\neq 0$ and $\widetilde{Q}(\boldsymbol{v})\neq 0$. Replacing $v$ by $\lambda v$ with $\lambda\in\mathbb{F}_{q}^{*}$ if necessary, we have $\widetilde{Q}(\boldsymbol{v})=\widetilde{Q}(\boldsymbol{u})$. By \cite[Lemma 2.5.10]{KL1990}, there exists an isometry $\widetilde{\theta}$ of $(W,\widetilde{Q})$ that maps $\boldsymbol{u}$ to $\boldsymbol{v}$. In fact, we have $\widetilde{Q}(\lambda\boldsymbol{v})=\lambda^{2}\widetilde{Q}(\boldsymbol{v})=\widetilde{Q}(\boldsymbol{u})$. By Witt's Theorem (see \cite[Theorem 7.4]{Taylor}), $\widetilde{\theta}$ can be extended to an isometry $\theta$ of $(V,Q)$ such that $\theta(\boldsymbol{u})=\boldsymbol{v}$. This shows that $X_{3}$ is an orbit of $H_{2}$ when $q$ is even.

Finally, we treat the case where $q$ is odd. The set $X_{3}$ can be partitioned into two subsets, $S_{1}$ and $S_{2}$, where
\begin{align*}
S_{1} &=\{\langle\boldsymbol{x}\rangle\in X_{3}:\widetilde{Q}((x_{3},x_{4},x_{5},x_{6}))\in\square_{q}\}, \\
S_{2} &=\{\langle\boldsymbol{x}\rangle\in X_{3}:\widetilde{Q}((x_{3},x_{4},x_{5},x_{6}))\in\blacksquare_{q}\}.
\end{align*}
Since $H_{2}$ contains an isometry group $K\cong\mathrm{PGO}_{4}^{-}(q)$ that stabilizes the subspace $(W,\widetilde{Q})$, we deduce from \cite[Lemma 2.10.5]{KL1990} that $K$ acts transitively on both $S_{1}$ and $S_{2}$. We will show that there exists a $g\in H_{2}$ that maps $S_{1}$ to $S_{2}$. Let $\delta\in\blacksquare_{q}$ and assume without loss of generality that $f(x_{5},x_{6})=x_{5}^{2}-\delta x_{6}^{2}$, which is irreducible over $\mathbb{F}_{q}$. Then we choose the quadratic form defining $\mathcal{Q}^{-}(5,q)$ to be
\[
Q(\boldsymbol{x})=x_{1}x_{2}+x_{3}x_{4}+x_{5}^{2}-\delta x_{6}^{2}.
\]
By Lemma \ref{lem_tech}, there exists a similarity $\rho$ acting on $V$ given by
\[
\rho(\boldsymbol{x})=(\alpha x_{1},x_{2},\alpha x_{3},x_{4},ax_{5}+\delta cx_{6},cx_{5}+ax_{6}),
\]
where $\alpha\in\blacksquare_{q}$ and $\alpha=a^{2}-\delta c^{2}$ for some $a,c\in\mathbb{F}_{q}$. It follows that $Q(\rho(\boldsymbol{x}))=\alpha Q(\boldsymbol{x})$ for $\boldsymbol{x}\in V$, and then $S_{1}$ and $S_{2}$ are fused by $\rho$. Therefore, $X_{3}$ is an orbit. This completes the proof.
\end{proof}

\begin{lemma}\label{lem_H1D1_1}
The subgroup $H_{1}$ has three orbits on $\Delta_1$.
\end{lemma}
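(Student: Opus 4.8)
The strategy is to produce an explicit $H_1$-invariant partition of $\Delta_1$ into three parts and then check that $H_1$ acts transitively on each. Since $H_1$ fixes the collinear points $\omega=\langle\boldsymbol{e}_1\rangle$ and $\omega_1=\langle\boldsymbol{e}_3\rangle$, it stabilizes the unique line $L=\langle\boldsymbol{e}_1,\boldsymbol{e}_3\rangle$ of $\mathcal{Q}^-(5,q)$ joining them, and I would take the partition
\[
\Delta_1=\{\omega_1\}\ \sqcup\ \bigl(L\cap\Delta_1\setminus\{\omega_1\}\bigr)\ \sqcup\ \bigl(\Delta_1\setminus L\bigr),
\]
where $|L\cap\Delta_1|=q$ (so the middle piece has $q-1$ points) and $\Delta_1\setminus L$ has $q^3$ points. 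As $H_1$ stabilizes $L$ and $\Delta_1$ and fixes $\omega_1$, each of the three nonempty pieces is $H_1$-invariant, so $H_1$ has at least three orbits on $\Delta_1$, and it remains to establish transitivity on each.

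The singleton $\{\omega_1\}$ is an orbit for free. For the middle piece, every point of $L\cap\Delta_1$ other than $\omega_1$ can be written as $\langle c\boldsymbol{e}_1+\boldsymbol{e}_3\rangle$ with $c\in\mathbb{F}_q^*$, and since $\phi_\lambda(\langle c\boldsymbol{e}_1+\boldsymbol{e}_3\rangle)=\langle \lambda c\,\boldsymbol{e}_1+\boldsymbol{e}_3\rangle$, the subgroup $\{\phi_\lambda:\lambda\in\mathbb{F}_q^*\}\subseteq H_1$ from \eqref{eq_phifunc} already acts transitively on these $q-1$ points.

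The substantive step is transitivity of $H_1$ on $\Delta_1\setminus L$, which I would obtain from Witt's Theorem in the spirit of Lemmas \ref{lem_H2D1_1} and \ref{lem_H2D2_1}. If $\gamma=\langle\boldsymbol{v}\rangle\in\Delta_1\setminus L$, then $B(\boldsymbol{e}_1,\boldsymbol{v})=0$ because $\gamma$ is collinear with $\omega$, and furthermore $B(\boldsymbol{e}_3,\boldsymbol{v})\neq 0$: otherwise $\langle\boldsymbol{e}_1,\boldsymbol{e}_3,\boldsymbol{v}\rangle$ would be a totally singular subspace of dimension $3$, which is impossible since $(V,Q)$ has Witt index $2$. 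Rescaling $\boldsymbol{v}$ so that $B(\boldsymbol{e}_3,\boldsymbol{v})=1$, a direct computation gives $Q(a\boldsymbol{e}_1+b\boldsymbol{e}_3+c\boldsymbol{v})=bc$, so $\langle\boldsymbol{e}_1,\boldsymbol{e}_3,\boldsymbol{v}\rangle$ is the orthogonal direct sum of the singular $1$-space $\langle\boldsymbol{e}_1\rangle$ and the hyperbolic plane $\langle\boldsymbol{e}_3,\boldsymbol{v}\rangle$. Consequently, for two such points $\gamma_i=\langle\boldsymbol{v}_i\rangle$ ($i=1,2$) normalized so that $B(\boldsymbol{e}_3,\boldsymbol{v}_i)=1$, the assignment $\boldsymbol{e}_1\mapsto\boldsymbol{e}_1$, $\boldsymbol{e}_3\mapsto\boldsymbol{e}_3$, $\boldsymbol{v}_1\mapsto\boldsymbol{v}_2$ is an isometry between the $3$-dimensional subspaces $\langle\boldsymbol{e}_1,\boldsymbol{e}_3,\boldsymbol{v}_1\rangle$ and $\langle\boldsymbol{e}_1,\boldsymbol{e}_3,\boldsymbol{v}_2\rangle$; by Witt's Theorem (see \cite[Theorem 7.4]{Taylor}) it extends to an isometry $g$ of $(V,Q)$. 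Then $g$ fixes $\langle\boldsymbol{e}_1\rangle$ and $\langle\boldsymbol{e}_3\rangle$, hence induces an element of $H_1$ carrying $\gamma_1$ to $\gamma_2$. This shows $\Delta_1\setminus L$ is a single $H_1$-orbit, and therefore $H_1$ has exactly three orbits on $\Delta_1$.

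The conceptual point is identifying the correct three-part partition; once that is in hand, the only mildly delicate item is the bookkeeping that the pieces are disjoint, nonempty, and exhaust $\Delta_1$, so that three invariant pieces with transitive $H_1$-action really yield three orbits and not fewer. The geometric inputs — the non-existence of totally singular $3$-spaces and the extendability of partial isometries — are precisely those already used in the preceding lemmas, and the transitivity arguments here are if anything simpler, so I do not anticipate any genuine obstacle.
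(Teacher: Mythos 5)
Your proof is correct, and it uses the same three-part decomposition as the paper: the singleton $\{\omega_1\}$, the $q-1$ points of the line $\langle\boldsymbol{e}_1,\boldsymbol{e}_3\rangle$ other than $\omega,\omega_1$ (which, by the generalized-quadrangle property, is exactly the paper's set of common neighbours of $\omega$ and $\omega_1$), and the remaining $q^3$ points; the first two orbits are handled identically, via the maps $\phi_\lambda$. Where you diverge is the largest piece: the paper fibres $\Gamma_2$ over the elliptic quadric $\mathcal{Q}^-(3,q)$ in $W$, quoting the $2$-transitivity of $\mathrm{P\Gamma O}_4^-(q)$ on its singular points to move between fibres and writing down explicit isometries $\theta_\lambda$ (fixing $\boldsymbol{e}_1,\boldsymbol{e}_3$ and sending $\boldsymbol{u}\mapsto\lambda\boldsymbol{e}_1+\boldsymbol{u}$) to act within a fibre, whereas you make a single application of Witt's extension theorem: after observing that $B(\boldsymbol{e}_3,\boldsymbol{v})\neq 0$ for $\langle\boldsymbol{v}\rangle\in\Delta_1\setminus L$ (since the Witt index is $2$) and normalizing $B(\boldsymbol{e}_3,\boldsymbol{v})=1$, the subspace $\langle\boldsymbol{e}_1,\boldsymbol{e}_3,\boldsymbol{v}\rangle$ has isometry type independent of $\boldsymbol{v}$, so the partial isometry fixing $\boldsymbol{e}_1,\boldsymbol{e}_3$ and sending $\boldsymbol{v}_1\mapsto\boldsymbol{v}_2$ extends to an element of $H_1$. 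Your route is shorter and more uniform (one invocation of \cite[Theorem 7.4]{Taylor}, valid in all characteristics since the ambient form is nondegenerate, exactly as the paper already uses it elsewhere), at the cost of being less explicit; the paper's two-step argument buys a concrete description of the orbit as a $\mathbb{F}_q$-fibration over $\mathcal{Q}^-(3,q)$ together with explicit group elements, which is in the same constructive spirit as its other lemmas. Either way the conclusion that $H_1$ has exactly three orbits on $\Delta_1$ stands.
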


\begin{proof}
Let $\Gamma_{1}$ be the set of neighbors of $\omega_{1}$ in $\Delta_1$ and let $\Gamma_{2}=\Delta_1\setminus(\{\omega_{1}\}\cup\Gamma_{1})$. It is clear that $\{\omega_{1}\}$ is an orbit of $H_{1}$. Next, we show that $\Gamma_{1}$ is also an orbit. For two vertices $\gamma_{1},\gamma_{2}\in\Gamma_{1}$, since they are adjacent to $\omega$, we can write $\gamma_{1}=\langle\lambda\boldsymbol{e_{1}}+\boldsymbol{e_{3}}\rangle$ and $\gamma_{2}=\langle\mu\boldsymbol{e_{1}}+\boldsymbol{e_{3}}\rangle$ for some $\lambda,\mu\in\mathbb{F}_{q}^{*}$. Thus $\phi_{\mu\lambda^{-1}}(\gamma_{1})=\gamma_{2}$. Here, the map $\phi_{\mu\lambda^{-1}}$ is defined by \eqref{eq_phifunc}. This shows that $H_{1}$ is transitive on $\Gamma_{1}$.

Finally, we show that $\Gamma_{2}$ is also an orbit. Recall that $H_{2}$ is the stabilizer of the subspace $(W,\widetilde{Q})$, and so $H_{2}\cong\mathrm{P\Gamma O}_{4}^{-}(q)$. Let $\mathcal{T}$ be the set of isotropic vectors in $W$. Since $H_{2}$ acts $2$-transitively on $\mathcal{T}$ (see \cite[Theorem 11.28]{Taylor}), we have $H_{2}\cap G_{\omega_{1}}=H_{1}\cap G_{\omega_{2}}$, which acts transitively on $\mathcal{T}\setminus\{\omega_{1}\}$. Note that
\[
\Gamma_{2}=\{\langle\lambda\boldsymbol{e_{1}}+\boldsymbol{u}\rangle:\lambda\in\mathbb{F}_{q},\langle\boldsymbol{u}\rangle\in\mathcal{T}\}.
\]
Thus, in order to show that $H_{1}$ is transitive on $\Gamma_{2}$, it suffices to show that for any $\langle\boldsymbol{u}\rangle\in \mathcal{T}\setminus\{\omega_{1}\}$ and $\lambda\in\mathbb{F}_{q}^{*}$, there is an element of $H_{1}$ mapping $\langle \boldsymbol{u} \rangle$ to $\langle\lambda\boldsymbol{e_{1}}+\boldsymbol{u}\rangle$. By a direct computation, we have
\[
\mathcal{T}\setminus\{\omega_{1}\}=\{\langle\boldsymbol{e_{4}}\rangle\}\cup\{\langle(0,0,1,-f(x_{5},x_{6}),x_{5},x_{6})\rangle:x_{5},x_{6}\in\mathbb{F}_{q} \mid (x_{5},x_{6})\neq(0,0)\}.
\]
Take $\lambda\in\mathbb{F}_{q}^{*}$ and $\boldsymbol{u}=(0,0,u_{3},u_{4},u_{5},u_{6})$ so that $\langle\boldsymbol{u}\rangle\in\mathcal{T}\setminus\{\omega_{1}\}$. Then $u_{4}\neq 0$ since $f$ is irreducible over $\F_q$. Consider the following map $\theta_\lambda$ on $V$:
\[
\theta_\lambda(\boldsymbol{x})=(x_{1}+\lambda u_{4}^{-1}x_{4},x_{2},x_{3}-\lambda u_{4}^{-1}x_{2},x_{4},x_{5},x_{6}), \forall\, \boldsymbol{x}\in V.
\]
It is easy to verify that $Q(\theta_\lambda(\bm{x}))=Q(\bm{x})$, so $\theta_\lambda$ is an isometry of $(V, Q)$ for all $\lambda\in \F_q^*$. Moreover, we have $\theta_\lambda(\boldsymbol{e_{1}})=\boldsymbol{e_{1}}$, $\theta_\lambda(\boldsymbol{e_{3}})=\boldsymbol{e_{3}}$ and $\theta_\lambda(\boldsymbol{u})=\lambda\boldsymbol{e_{1}}+\boldsymbol{u}$. So these maps and their inverses are all in $H_{1}$. Recall that $H_1\cap G_{\omega_2}$ acts transitively on $\mathcal{T}\setminus \{\omega_1\}$, and it implies that $H_1$ acts transitively on $\Gamma_2$. Therefore, $H_{1}$ has three orbits on $\Delta_1$. This completes the proof.
\end{proof}



\begin{theorem}\label{thm_QTT}
For any prime power $q$, the collinearity graph of $\mathcal{Q}^-(5,q)$ is triply-transitive.
\end{theorem}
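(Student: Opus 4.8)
The plan is to assemble the ingredients prepared above. By the inclusion chain \eqref{eq_inclusion} together with Definition \ref{def_tt}, once $\Gamma$ is known to be vertex-transitive it suffices to verify that $\dim(T_0)=\dim(\widetilde T)=15$; the two halves of this are delivered by Lemma \ref{lem:dimT0} and by Lemma \ref{lem:block-decomp}, respectively.

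First I would record the classical fact that $G=\operatorname{P\Gamma O}_{6}^{-}(q)$ acts on the point set of $\mathcal Q^-(5,q)$ as a rank $3$ group, whose two non-diagonal orbitals are precisely collinearity and non-collinearity. This exhibits $\Gamma$ as a rank $3$ strongly regular graph, so that Lemma \ref{lem:block-decomp} is applicable, and in particular it is vertex-transitive, which legitimizes the simplified notation $T_0, T, \widetilde T$.

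Next I would compute $\dim(T_0)$ from Lemma \ref{lem:dimT0}. The graph $\Gamma$ is primitive for every prime power $q$, as already noted (the inequalities $\lambda<k-1$ and $\mu<k$ hold), so the lemma applies. A line of $\mathcal Q^-(5,q)$ carries $q+1\ge 3$ pairwise collinear points, so $\Gamma$ contains triangles (equivalently $\lambda=q-1\ge 1$), and a direct inspection of the complementary parameters gives $\bar\lambda = v-2-2k+\mu>0$, so $\bar\Gamma$ contains triangles as well. Thus part (2) of Lemma \ref{lem:dimT0} yields $\dim(T_0)=15$. For $\dim(\widetilde T)$ I would apply Lemma \ref{lem:block-decomp} with the base vertex $\omega$ and the vertices $\omega_1\in\Delta_1$, $\omega_2\in\Delta_2$ fixed earlier, writing $H_i=G_\omega\cap G_{\omega_i}$. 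By Lemma \ref{lem_H1D1_1} the subgroup $H_1$ has $r_1=3$ orbits on $\Delta_1$, by Lemma \ref{lem_H2D2_1} the subgroup $H_2$ has $r_2=3$ orbits on $\Delta_2$, and by Lemma \ref{lem_H2D1_1} the subgroup $H_2$ has $t=2$ orbits on $\Delta_1$, so that
\[
D(\widetilde T)=\begin{bmatrix}1&1&1\\1&3&2\\1&2&3\end{bmatrix},\qquad \dim(\widetilde T)=5+r_1+r_2+2t=5+3+3+4=15.
\]
Finally, $T_0\subseteq T\subseteq\widetilde T$ with $\dim(T_0)=\dim(\widetilde T)$ forces $T_0=T=\widetilde T$, which together with vertex-transitivity is exactly the assertion that $\Gamma$ is triply-transitive, establishing Conjecture \ref{conj1}.

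I expect no essentially new obstacle inside the theorem itself: all of the genuine work has already been absorbed into Lemmas \ref{lem_H2D1_1}--\ref{lem_H1D1_1}, the orbit analysis of the point-pair stabilizers $H_1,H_2$, whose delicate ingredients are the fusion arguments built on Witt's theorem and on the similarity $\rho$ supplied by Lemma \ref{lem_tech} (which is why the quadratic form is specialized to $x_1x_2+x_3x_4+x_5^2-\delta x_6^2$ when $q$ is odd). The only residual point worth checking carefully is that primitivity and the presence of triangles in both $\Gamma$ and $\bar\Gamma$ hold uniformly in $q$, including the smallest prime powers such as $q=2$; the parameter inequalities above cover this at once.
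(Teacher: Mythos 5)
Your proposal is correct and follows essentially the same route as the paper's own proof: primitivity plus triangles in both $\Gamma$ and its complement give $\dim(T_0)=15$ via Lemma \ref{lem:dimT0}, and Lemmas \ref{lem_H2D1_1}, \ref{lem_H2D2_1}, \ref{lem_H1D1_1} combined with Lemma \ref{lem:block-decomp} give $\dim(\widetilde T)=15$, forcing equality along the inclusion chain. The only cosmetic difference is that you certify the triangles by a parameter computation (and note the rank $3$ action explicitly), whereas the paper exhibits explicit triples; both are fine.
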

\begin{proof}
It is well-known that $G$ is transitive on the vertex set of $\Gamma$ (see \cite[Theorem 11.30]{Taylor}), and $\Gamma$ is strongly regular with parameters $((q+1)(q^3+1), q(q^2+1), q-1, q^2+1)$ (see \cite[2.6.3]{Srgs22}). By \cite[1.1.3]{Srgs22}, we see that $\Gamma$ is primitive. 
Moreover, both $\Gamma$ and its complement contain triangles. For example, the points $\la {\bf e_1}\ra,\la {\bf e_3}\ra,\la {\bf e_1}+{\bf e_3}\ra$ are mutually collinear in $\cQ^-(5,q)$, forming a triangle in $\Gamma$. And the points $\la {\bf e_1} \ra,\la {\bf e_2}\ra, \la {\bf e_1}+{\bf e_2}-{\bf e_3}+{\bf e_4}\ra$  are pairwise noncollinear, forming a triangle in the complement graph. By Lemma \ref{lem:dimT0}, we have $\dim(T_0)=15$. 
By Lemmas \ref{lem:block-decomp}, \ref{lem_H2D1_1}, \ref{lem_H2D2_1} and \ref{lem_H1D1_1}, we obtain that the block dimension decomposition of $\widetilde{T}$ is $\begin{bmatrix}1&1&1\\1&3&2\\1&2&3\end{bmatrix}$ and $\dim(\widetilde{T})=15$. Then we conclude that $T_0=T=\widetilde{T}$ by comparing their dimensions. Therefore, $\Gamma$ is triply-transitive.
\end{proof}

\subsection{Triply transitivity of the affine polar graph {$\mathrm{VO}_{2m}^{\varepsilon}(2)$}}\

In this subsection, we always assume that $m\geq 2$ is an integer and $\varepsilon=\pm 1$. We first recall the definition of the graph $\mathrm{VO}_{2m}^{\varepsilon}(2)$. Let $V=\mathbb{F}_{2}^{2m}$ and define a map $Q(\bm{x})$ on $V$ by
\[
Q(\boldsymbol{x})=x_{1}x_{2}+\cdots+x_{2m-3}x_{2m-2}+f(x_{2m-1},x_{2m}),
\]
where 
\[f(x_{2m-1},x_{2m})=\begin{cases}
x_{2m-1}x_{2m},&\text{ if }\varepsilon=1,\\
x_{2m-1}^{2}+x_{2m-1}x_{2m}+x_{2m}^{2},&\text{ if }\varepsilon=-1.
\end{cases}\]
Then $Q$ is a nondegenerate quadratic form on $V$ of type $\varepsilon$. We call a vector $\bm{x}\in V$ {\it singular} if $Q(\bm{x})=0$. 

The associated bilinear form $B:V\times V\to\mathbb{F}_{2}$ of $Q$ is defined by
\begin{equation}
B(\boldsymbol{x},\boldsymbol{y})=Q(\boldsymbol{x}+\boldsymbol{y})-Q(\boldsymbol{x})-Q(\boldsymbol{y}).
\end{equation}
Given two vectors $\bm{x},\bm{y}\in V$, we say that they are {\it orthogonal} if $B(\bm{x},\bm{y})=0$. 
For a vector $\bm{x}\in V$, we define its orthogonal complement as
\[
\bm{x}^{\perp}:=\{\langle\boldsymbol{y}\rangle:\boldsymbol{y}\in V\mid B(\boldsymbol{x},\boldsymbol{y})=0\}.
\]
The affine polar graph $\mathrm{VO}_{2m}^{\varepsilon}(2)$ has $V$ as its vertex set, and two vectors $\bm{u}$ and $\bm{v}$ are adjacent if $Q(\bm{u}-\bm{v})=0$.

Let $\{\bm{e}_1,\ldots,\bm{e}_{2m}\}$ be the standard basis for $V$. We fix the following notation:
\begin{align*}
\omega &=\bm{0}=(0,0,0,\ldots,0), \\
\omega_{1} &=\bm{e}_1=(1,0,0,\ldots,0), \\
\omega_{2} &=\bm{e}_1+\bm{e}_2=(1,1,0,\ldots,0).
\end{align*}
Then $\Delta_1$ consists of all nonzero singular vectors in $V$ and $\Delta_2$ consists of all non-singular vectors.

Let $G$ be the automorphism group of $\mathrm{VO}_{2m}^{\varepsilon}(2)$. Then all the isometries of $(V,Q)$ belong to $G$ and $G=\mathrm{AGO}_{2m}^\varepsilon(2)\cong T_V\rtimes \mathrm{GO}_{2m}^\varepsilon(2)$,  where $T_V=\{x\mapsto x+v: v\in V\}$, cf. \cite[Section 3.1]{DPSS25}.  Set $H_i = G_{\omega} \cap G_{\omega_i}$ for $i=1,2$. 

It is known that $\mathrm{VO}_{2m}^{\varepsilon}(2)$ is triply-regular \cite[Proposition 3.6.1]{Srgs22}. Furthermore, with respect to the vertex $\omega$, its first and second subconstituents are the graphs $\Gamma(\mathcal{Q}^{\varepsilon}(2m-1,2))$ and $\mathrm{NO}_{2m}^{\varepsilon}(2)$, respectively (see \cite{BS1990, Srgs22}); these are the graphs on the nonzero singular and nonsingular vectors of $V$, with adjacency defined by orthogonality. For $m \geq 3$, the automorphism groups of both $\Gamma(\mathcal{Q}^{\varepsilon}(2m-1,2))$ and $\mathrm{NO}_{2m}^{\varepsilon}(2)$ are rank 3. As established in \cite[Section 6]{HMR25}, this implies that 
the block dimension decomposition of $\widetilde{T}$ takes the form:
\begin{equation}\label{eq_BDD2}
D(\widetilde{T})=\begin{bmatrix}
1 & 1 & 1 \\
1 & 3 & t \\
1 & t & 3
\end{bmatrix},
\end{equation}
where $t$ is the number of orbits of $H_2$ on $\Delta_1$. Consequently, $\dim(\widetilde{T}) = 11 + 2t$, and determining this dimension reduces to computing the parameter $t$.




\begin{lemma}\label{lem_H2D1_2}
Let $m\geq 3$. The subgroup $H_{2}$ has two orbits in $\Delta_1$.
\end{lemma}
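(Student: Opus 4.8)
The plan is to show that $H_2 = G_\omega \cap G_{\omega_2}$ acts on $\Delta_1$ with exactly two orbits by producing an invariant that separates $\Delta_1$ into two nonempty pieces and then showing $H_2$ is transitive on each. Since $\omega = \boldsymbol{0}$, the stabilizer $G_\omega$ is just the isometry group $\mathrm{GO}_{2m}^\varepsilon(2)$ of $(V,Q)$, and $H_2$ is the stabilizer in this group of the non-singular vector $\omega_2 = \boldsymbol{e}_1 + \boldsymbol{e}_2$. The natural invariant on $\Delta_1$ (the nonzero singular vectors $\boldsymbol{v}$) is the bilinear pairing $B(\boldsymbol{v}, \omega_2) \in \mathbb{F}_2$. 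So I would set $Y_0 = \{\boldsymbol{v} \in \Delta_1 : B(\boldsymbol{v},\omega_2) = 0\}$ and $Y_1 = \{\boldsymbol{v} \in \Delta_1 : B(\boldsymbol{v},\omega_2) = 1\}$, note that $H_2$ preserves each $Y_j$, and both are nonempty (e.g.\ $\boldsymbol{e}_3 \in Y_0$ and, after checking the pairing, a suitable singular vector lies in $Y_1$).

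The core of the argument is then transitivity of $H_2$ on $Y_0$ and on $Y_1$ separately, and this is where I expect the work to lie. For $Y_1$: the vectors $\boldsymbol{v} \in \Delta_1$ with $B(\boldsymbol{v},\omega_2)=1$ are precisely the singular vectors not in $\omega_2^\perp$; given two such, $\boldsymbol{v}, \boldsymbol{v}'$, I want an isometry fixing $\omega_2$ and sending $\boldsymbol{v} \mapsto \boldsymbol{v}'$. Since $Q(\boldsymbol{v}) = Q(\boldsymbol{v}') = 0$ and $B(\boldsymbol{v},\omega_2) = B(\boldsymbol{v}',\omega_2) = 1$, the pairs $(\boldsymbol{v},\omega_2)$ and $(\boldsymbol{v}',\omega_2)$ span isometric (hyperbolic-line-type) subspaces, so Witt's extension theorem (\cite[Theorem 7.4]{Taylor}) gives an isometry of $(V,Q)$ carrying one pair to the other; that isometry fixes $\omega_2$ and lies in $H_2$. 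For $Y_0$: here $\boldsymbol{v} \in \omega_2^\perp$, and one restricts attention to the orthogonal geometry on $\omega_2^\perp$ (a non-degenerate space of dimension $2m-1$, since $\omega_2$ is non-singular and we are in characteristic $2$, so $\omega_2 \notin \omega_2^\perp$ and $V = \langle \omega_2 \rangle \oplus \omega_2^\perp$ is not quite right — rather $\omega_2^\perp$ is a non-degenerate quadratic space of dimension $2m-1$ of the appropriate type). The isometry group of $Q|_{\omega_2^\perp}$ sits inside $H_2$ (extend by fixing $\omega_2$), and it acts transitively on the nonzero singular vectors of $\omega_2^\perp$ by Witt's theorem again — here I should invoke $m \geq 3$ to guarantee $\omega_2^\perp$ has Witt index at least $1$ and enough room for Witt's theorem / the transitivity statement (\cite[Theorem 11.28]{Taylor} or the argument already used in Lemma \ref{lem_H2D1_1}); the small case $m=2$ is exactly why that hypothesis is present and is handled separately elsewhere.

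The main obstacle, and the step that needs care, is the transitivity on $Y_0$: one must correctly identify the type (parabolic / the isometry type) of the restricted form $Q|_{\omega_2^\perp}$ on the $(2m-1)$-dimensional space and confirm it still contains nonzero singular vectors and has an isometry group transitive on them — this is a standard consequence of Witt's theorem but requires the dimension to be large enough, which is precisely where the hypothesis $m \geq 3$ enters. A secondary point to verify is that $Y_1$ is genuinely nonempty, i.e.\ that there exists a singular vector $\boldsymbol v$ with $B(\boldsymbol v, \omega_2) = 1$; this is a short explicit computation (e.g.\ in the $\varepsilon = 1$ case, $\boldsymbol{e}_1$ pairs with $\omega_2$ to give $1$ but $\boldsymbol{e}_1 = \omega_1$ is singular, so $\omega_1 \in Y_1$). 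Assembling these, $H_2$ has exactly the two orbits $Y_0$ and $Y_1$ on $\Delta_1$, which is the claim.
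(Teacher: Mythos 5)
Your proof is correct and, at the level of the decomposition, is the same as the paper's: since $Q(\boldsymbol{v}+\omega_2)=Q(\boldsymbol{v})+Q(\omega_2)+B(\boldsymbol{v},\omega_2)$ and $Q(\omega_2)=1$, your sets $Y_1$ and $Y_0$ are exactly the paper's $\Gamma_1$ (common neighbours of $\omega$ and $\omega_2$) and $\Gamma_2=\Delta_1\setminus\Gamma_1$, and both partitions are visibly $H_2$-invariant. Where you genuinely differ is in proving transitivity on each piece. The paper argues in coordinates: it subdivides each piece according to the first two entries, uses the transitivity of the copy of $\mathrm{GO}_{2m-2}^{\varepsilon}(2)$ stabilizing $W=\langle\boldsymbol{e}_3,\ldots,\boldsymbol{e}_{2m}\rangle$ on the singular, respectively nonsingular, vectors of $W$, and then exhibits explicit isometries ($\theta$, $\rho$, $\phi$) fusing the subpieces. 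You instead apply Witt's extension theorem to the pairs $(\boldsymbol{v},\omega_2)$: over $\mathbb{F}_2$ the data $Q(\boldsymbol{v})$, $Q(\omega_2)$, $B(\boldsymbol{v},\omega_2)$ determine $Q$ on $\langle\boldsymbol{v},\omega_2\rangle$, so any two vectors of the same $Y_j$ are interchanged by an isometry of $(V,Q)$ fixing $\omega_2$ (and $\boldsymbol{0}$), hence by an element of $H_2$. This is shorter and uniform in $j$, avoids all the explicit maps, and costs only Witt's theorem for the nondegenerate characteristic-2 space, a tool the paper itself invokes elsewhere, so the route is legitimate and arguably cleaner.

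Two points to repair. First, your description of $\omega_2^\perp$ is wrong as stated: in characteristic $2$ the polar form is alternating, so $B(\omega_2,\omega_2)=0$ and $\omega_2\in\omega_2^\perp$; thus $\omega_2^\perp$ has dimension $2m-1$ with $\langle\omega_2\rangle$ the radical of the restricted bilinear form (its isometry group is $\mathrm{O}_{2m-1}(2)\cong\mathrm{Sp}_{2m-2}(2)$), not a complement of $\langle\omega_2\rangle$. You do not need this detour at all: the pair-extension argument you use for $Y_1$ applies verbatim to $Y_0$, where $Q(\boldsymbol{v})=Q(\boldsymbol{v}')=0$ and $B(\boldsymbol{v},\omega_2)=B(\boldsymbol{v}',\omega_2)=0$, so simply delete the discussion of the restricted geometry. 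Second, your explanation of the hypothesis $m\geq 3$ is not quite the real reason: your Witt argument (and the two-orbit conclusion) works for $m=2$ as well, both pieces being nonempty since $\mu>0$ and $k>\mu$; in the paper the hypothesis comes from the rank-$3$ structure of the subconstituents needed for the block decomposition \eqref{eq_BDD2}, the cases $m=2$ being settled by computer. Neither issue affects the validity of the lemma as you prove it.
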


\begin{proof}
Let $\Gamma_{1}$ be the set of common neighbors of $\omega$ and $\omega_{2}$, let $\Gamma_{2}=\Delta_1\setminus\Gamma_{1}$. We will show that $H_{2}$ is transitive on both $\Gamma_{1}$ and $\Gamma_{2}$. Let $\widetilde{Q}$ be the restriction of $Q$ on $W=\langle\boldsymbol{e_{3}},\ldots,\bm{e}_{2m}\rangle$.

We first consider the action of $H_{2}$ on $\Gamma_{1}$. We have
\[
\Gamma_{1}=\{(1,0,\ldots,0)\}\cup\{(0,1,0,\ldots,0)\}\cup Z_{1}\cup Z_{2},
\]
where
\begin{align*}
Z_{1} &=\{(1,0,x_{3},\ldots,x_{2m})\in V \mid (x_{3},\ldots,x_{2m})\neq(0,\ldots,0),\widetilde{Q}((x_{3},\ldots,x_{2m}))=0\}, \\
Z_{2} &=\{(0,1,x_{3},\ldots,x_{2m})\in V \mid (x_{3},\ldots,x_{2m})\neq(0,\ldots,0),\widetilde{Q}((x_{3},\ldots,x_{2m}))=0\}.
\end{align*}

Since $H_{2}$ fixes $\omega$ and $\omega_{2}$, it contains a subgroup $K$ fixing both $\bm{e}_1$ and $\bm{e}_2$. Thus $K$ fixes $\<\bm{e}_1\>^\perp\cap \<\bm{e}_2\>^\perp=W$. It follows that $K$ contains a subgroup isomorphic to $\PGO_{2m-2}^{\varepsilon}(2)$, which is transitive on both $Z_1$ and $Z_2$ (see \cite[Theorem 2.10.5]{KL1990}). Now, we consider the map $\theta$ on $V$ defined by
\[
\theta(\boldsymbol{x})=(x_{1}+x_{4},x_{2}+x_{4},x_{1}+x_{2}+x_{3}+x_{4},x_{4},\ldots,x_{2m}).
\]
Since $Q(\theta(\bm{x}))=Q(\bm{x})$, $\theta$ is an isometry of $(V,Q)$ that fixes both $\omega$ and $\omega_{2}$. Thus $\theta\in H_2$. Moreover, we have $\theta((1,0,\ldots,0))=(1,0,1,0,\ldots,0)\in Z_{1}$ and $\theta((0,1,\ldots,0))=(0,1,1,0,\ldots,0)\in Z_{2}$. Thus $H_2$ is transitive on both $Z_1':=Z_{1}\cup\{(1,0,\ldots,0)\}$ and $Z_2':=Z_{2}\cup\{(0,1,0,\ldots,0)\}$. Finally, we show that $Z_1'$ and $Z_2'$ merge into one orbit by considering the map $\rho$ on $V$ defined by
\[
\rho(\boldsymbol{x})=(x_{2},x_{1},x_{3},\ldots,x_{2m}).
\]
It is clear that $\rho$ is an isometry of $(V,Q)$ that fixes $\omega$ and $\omega_2$, and so $\rho\in H_{2}$. Moreover, we have $\rho(Z^{\prime}_{1})=Z^{\prime}_{2}$. Therefore, $H_{2}$ is transitive on $\Gamma_{1}$.

In the remaining part of the proof, we consider the action of $H_{2}$ on $\Gamma_{2}$. 
We have $\Gamma_{2}=S_{1}\cup S_{2}$, where
\begin{align*}
S_{1} &=\{(0,0,x_{3},\ldots,x_{2m})\in V \mid (x_{3},\ldots,x_{2m})\neq(0,\ldots,0),\widetilde{Q}((x_{3},\ldots,x_{2m}))=0\}, \\
S_{2} &=\{(1,1,x_{3},\ldots,x_{2m})\in V \mid \widetilde{Q}((x_{3},\ldots,x_{2m}))=1\}.
\end{align*}
By a similar argument as in the first part, we see that $H_2$ contains a subgroup that is transitive on both $S_1$ and $S_2$. To complete the proof, we show that $S_{1}$ and $S_{2}$ can be fused by an isometry of $(V,Q)$. Take $\bm{u}=(0,0,1,0,\ldots,0)\in S_{1}$ and $\bm{v}=(1,1,1,1,0,\ldots,0)\in S_{2}$. Define a map $\phi$ on $V$ by
\[
\phi(\boldsymbol{x})=(x_{1}+x_{3},x_{2}+x_{3},x_{3},x_1+x_2+x_3+x_4,x_{5},\ldots,x_{2m}).
\]
It is straightforward to see that $\phi$ is an isometry of $(V,Q)$, and so $\phi\in G$. Moreover, we have $\phi(\omega)=\omega$, $\phi(\omega_{2})=\omega_{2}$ and $\phi(\bm{u})=\bm{v}$. Thus $H_{2}$ is transitive on $S_{1}\cup S_{2}=\Gamma_{2}$. This completes the proof.
\end{proof}



\begin{theorem}\label{thm_VOTT}
For any integer $m\geq 2$ and $\varepsilon=\pm 1$, the affine polar graph $\mathrm{VO}_{2m}^\varepsilon(2)$ is triply-transitive. 
\end{theorem}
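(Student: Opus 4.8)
The plan is to mimic the structure of the $\mathcal{Q}^-(5,q)$ argument: establish the necessary combinatorial and group-theoretic facts about $\mathrm{VO}_{2m}^\varepsilon(2)$, and then combine Lemma~\ref{lem:dimT0} with the block dimension decomposition \eqref{eq_BDD2} to force $\dim(T_0)=\dim(\widetilde T)$. First I would record that $\mathrm{VO}_{2m}^\varepsilon(2)$ is a vertex-transitive strongly regular graph: vertex-transitivity is immediate since the translation group $T_V\le G$ acts transitively on $V$, and the strong-regularity parameters are standard (see \cite[3.3.1]{Srgs22}). A quick parameter check via \cite[1.1.3]{Srgs22} shows the graph is primitive, and exhibiting an edge-triangle and a non-edge-triangle (e.g.\ $\{\bm 0, \bm e_1, \bm e_2\}$ are pairwise singular-difference since $Q(\bm e_i)=Q(\bm e_1-\bm e_2)=0$, while a suitable nonsingular configuration works in the complement) puts us in case (2) of Lemma~\ref{lem:dimT0}, so $\dim(T_0)=15$.

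Next I would treat the parameter $t$ in \eqref{eq_BDD2}, which is the number of $H_2$-orbits on $\Delta_1$. For $m\ge 3$ this is exactly Lemma~\ref{lem_H2D1_2}, which gives $t=2$, hence $D(\widetilde T)=\begin{bmatrix}1&1&1\\1&3&2\\1&2&3\end{bmatrix}$ and $\dim(\widetilde T)=15$; comparing with $\dim(T_0)=15$ and the chain \eqref{eq_inclusion} forces $T_0=T=\widetilde T$, so the graph is triply-transitive for all $m\ge 3$. The small case $m=2$ must be handled separately, since the subconstituent automorphism groups need not be rank $3$ there and Lemma~\ref{lem_H2D1_2} was stated only for $m\ge 3$. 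For $m=2$ the graphs $\mathrm{VO}_4^+(2)$ and $\mathrm{VO}_4^-(2)$ are small (on $16$ vertices), and $\mathrm{VO}_4^-(2)$ is in fact the complete multipartite graph / a member of an already-handled family (it is known to be the $4\times 4$ rook's graph's complement, or more precisely $\mathrm{VO}_4^-(2)$ is strongly regular with parameters $(16,5,0,2)$, the Clebsch graph), while $\mathrm{VO}_4^+(2)$ has parameters $(16,9,4,6)$ (the complement of the Clebsch graph); both were already verified to be triply-transitive in \cite{HMR25}, or can be checked directly by computing the orbit counts. I would cite \cite{HMR25} for $m=2$ and restrict the genuinely new work to $m\ge 3$.

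The only genuine obstacle is the $m=2$ boundary case, and the resolution is to defer to the explicit verification already present in \cite[Section~6]{HMR25} rather than re-deriving it; everything else is an assembly of Lemmas~\ref{lem:dimT0}, \ref{lem:block-decomp}, \ref{lem_H2D1_2} and the inclusion chain. Concretely, the write-up would be: state vertex-transitivity and the parameters; invoke primitivity and the triangle observations to get $\dim(T_0)=15$ from Lemma~\ref{lem:dimT0}(2); invoke \eqref{eq_BDD2} with $t=2$ (Lemma~\ref{lem_H2D1_2}) for $m\ge3$ to get $\dim(\widetilde T)=15$; conclude $T_0=T=\widetilde T$ by dimension comparison along \eqref{eq_inclusion}; and dispatch $m=2$ by reference to \cite{HMR25}. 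This completes the proof of Theorem~\ref{thm_VOTT}, and together with Theorem~\ref{thm_QTT} and the classification of \cite{HMR25}, it yields Theorem~\ref{thm_main}.
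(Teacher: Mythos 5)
Your assembly for $m\geq 3$ is exactly the paper's route (Lemma~\ref{lem:dimT0}, the decomposition \eqref{eq_BDD2} with $t=2$ from Lemma~\ref{lem_H2D1_2}, then dimension comparison along \eqref{eq_inclusion}), but there are concrete errors in the supporting details. First, your exhibited triangle $\{\bm{0},\bm{e}_1,\bm{e}_2\}$ is not a triangle: over $\mathbb{F}_2$ one has $\bm{e}_1-\bm{e}_2=\bm{e}_1+\bm{e}_2$ and $Q(\bm{e}_1+\bm{e}_2)=1$, so $\bm{e}_1\not\sim\bm{e}_2$ — indeed this is precisely why $\omega_2=\bm{e}_1+\bm{e}_2$ is chosen as the non-neighbour. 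A correct choice is $\bm{0},\bm{e}_1,\bm{e}_3$ (as in the paper). Second, and more substantively, you cannot claim $\dim(T_0)=15$ uniformly: for $(m,\varepsilon)=(2,-1)$ the graph has parameters $(16,5,0,2)$ (Clebsch graph), hence is triangle-free, so case (2) of Lemma~\ref{lem:dimT0} does not apply; there one is in case (1) with $\dim(T_0)=14$, and the matching computation gives $\dim(\widetilde T)=14$, not $15$. The paper therefore splits off $(2,-1)$ from all other cases, whereas your first paragraph asserts the case-(2) conclusion across the board.

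Your treatment of $m=2$ also has gaps. The identifications are swapped and partly wrong: $\mathrm{VO}_4^-(2)$ is the Clebsch graph $(16,5,0,2)$, which is \emph{not} a member of the already-handled families (a)--(f), while $\mathrm{VO}_4^+(2)$ has parameters $(16,9,4,6)$ (complement of the $4\times4$ grid, not of the Clebsch graph); so "defer to an already-handled family" does not dispose of $\mathrm{VO}_4^-(2)$. Moreover, relying on an "explicit verification already present in \cite{HMR25}" is not something the conjecture statement licenses — \cite{HMR25} only reports small-example evidence and leaves $m\geq 2$ conjectural, which is why the paper settles both $(2,+1)$ and $(2,-1)$ by a direct computation (Magma \cite{Magma}), obtaining $\dim(\widetilde T)=15$ and $14$ respectively and comparing with the corresponding $\dim(T_0)$. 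With the triangle example corrected, the $(2,-1)$ case rerouted through Lemma~\ref{lem:dimT0}(1), and the $m=2$ cases verified directly rather than by citation, your argument coincides with the paper's.
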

\begin{proof}
Write $\Gamma=\mathrm{VO}_{2m}^\varepsilon(2)$. 
It is well-known that $G$ is transitive on the vertex set of $\Gamma$ (see \cite[Theorem 11.30]{Taylor}), and $\Gamma$ has parameters $(2^{2m}, (2^m-\varepsilon)(2^{m-1}+\varepsilon), 2(2^{m-1}-\varepsilon)(2^{m-2}+\varepsilon), 2^{m-1}(2^{m-1}+\varepsilon))$ (see \cite[3.3.1]{Srgs22}). By \cite[1.1.3]{Srgs22}, we see that $\Gamma$ is primitive. The rest of the proof is divided into two cases.

We first treat the case where $(m,\varepsilon)=(2,-1)$. The graph $\Gamma$ has parameters $(16,5,0,2)$ and so it does not contain a triangle. But the complement of $\Gamma$ contains a triangle, for example, the one induced by the vertices $\bm{0},\bm{e}_1+\bm{e}_2$ and $\bm{e}_2+\bm{e}_3+\bm{e}_4$. By Lemma \ref{lem:dimT0}, we have $\dim(T_0)=14$. One can check by Magma \cite{Magma} that $\dim(\widetilde{T})=14$. Therefore, $T_0=T=\widetilde{T}$ by comparing their dimensions, and consequently, we have $\Gamma$ is triply-transitive. 

Next, we assume that $(m,\varepsilon)\neq (2,-1)$. In this case, both $\Gamma$ and its complement contain triangles. For example, the vertices $\bm{0},\bm{e}_1$ and $\bm{e}_3$ form a triangle in $\Gamma$; and the vertices $\bm{0}, \bm{e}_1+\bm{e_2}$ and $\bm{e}_2+\bm{e}_3+\bm{e}_4$ form a triangle in the complement of $\Gamma$.
By Lemma \ref{lem:dimT0}, we have $\dim(T_0)=15$. On the other hand, if $(m,\varepsilon)=(2,+1)$, then it can be verified by Magma \cite{Magma} that $\dim(\widetilde{T})=15$; if $m\geq 3$, then we deduce from Lemmas \ref{lem:block-decomp} and \ref{lem_H2D1_2}, and Equation \eqref{eq_BDD2} that $\dim(\widetilde{T})=15$. We then conclude that $T_0=T=\widetilde{T}$ by comparing their dimensions. Therefore, $\Gamma$ is triply-transitive.
\end{proof}

\subsection{Proof of the main classification theorem}

With the triple transitivity of the two unresolved families established, we now present the proof of our main result.

\begin{proof}[Proof of Theorem \ref{thm_main}]
By \cite[Theorems 1.3 and 1.4]{HMR25}, any triply-transitive strongly regular graph must belong to one of the eight families listed in Section \ref{sec_intro} as (a)--(h).
Conversely, it was already proved in~\cite{HMR25} that the graphs in families (a)--(f) are triply-transitive. We have proved in Theorems \ref{thm_QTT} and \ref{thm_VOTT} that the graphs in families (g) and (h) are triply-transitive. This completes the classification.
\end{proof}

\section*{Acknowledgement} 
Weicong Li acknowledges the support of the National Natural Science Foundation of China Grant No. 12301422. Hanlin Zou acknowledges the support of the National Natural Science Foundation of China Grant No. 12461061.

\bibliographystyle{plain}

\end{document}